\documentclass[reqno,12pt]{amsart}
\usepackage{amsmath, amssymb, amsthm, amsfonts} 
\usepackage[english]{babel}
\usepackage{bbm}
\usepackage{graphicx}
\usepackage{url}
\usepackage{soul}
\usepackage{epstopdf}
\usepackage[ruled,vlined]{algorithm2e}
\usepackage[a4paper,bindingoffset=0.5cm,left=2cm,right=2cm,top=2.5cm,bottom=2cm,footskip=.8cm]{geometry}
\usepackage{rotating}
\usepackage{amsbsy,enumerate}
\usepackage{comment}
\usepackage{mathrsfs} 

\newcommand{\bs}{\boldsymbol}

\newcommand{\vb}{\vspace{3.2mm}}
\renewcommand{\hat}{\widehat}

\DeclareMathOperator*{\argmin}{arg\,min}

\newcommand{\vertiii}[1]{{\left\vert\kern-0.25ex\left\vert\kern-0.25ex\left\vert #1 \right\vert\kern-0.25ex\right\vert\kern-0.25ex\right\vert}}
\allowdisplaybreaks
\usepackage{hyperref}

\allowdisplaybreaks

\newtheorem{lemma}{Lemma}

\newtheorem{theorem}{Theorem}
\newtheorem{remark}{Remark}

\newtheorem{proposition}{Proposition}

\usepackage[utf8]{inputenc}
\usepackage{type1cm}         
\usepackage{multicol}        
\usepackage[bottom]{footmisc}

\usepackage{newtxtext}       %
\usepackage{newtxmath}       

\usepackage{tikz}
\usetikzlibrary{plotmarks}
\usetikzlibrary{decorations.pathreplacing}
\usepackage{pgfplots}
\pgfplotsset{width=10cm,compat=1.9}
\usepackage{subcaption}
\usepackage{standalone}
\usetikzlibrary{automata,positioning}
\usetikzlibrary{decorations.pathreplacing,decorations.markings,shapes.geometric}
\usetikzlibrary{shapes,arrows}
\usetikzlibrary{backgrounds,calc,positioning}

\usepackage{pgfplots}
\pgfplotsset{compat=1.9}

\usetikzlibrary{chains,shapes.multipart}
\usetikzlibrary{shapes,calc}
\usetikzlibrary{automata,positioning}

\usepackage{tikz}
\usetikzlibrary{automata,positioning}
\usetikzlibrary{decorations.pathreplacing,decorations.markings,shapes.geometric}
\usetikzlibrary{shapes,arrows}
\usetikzlibrary{backgrounds,calc,positioning}

\begin{document}

	\title[Estimating Graph Dynamics from Population Observations]{Estimating Graph Dynamics from Population Observations}
\author[P. Braunsteins, M. Mandjes,  {\tiny and} F. Montalescot]{Peter Braunsteins, Michel Mandjes,  {\tiny and} Florian Montalescot}
	
	\begin{abstract}
		In this paper we consider a population process evolving on a dynamic random graph. The dynamic random graph is an Erd\H{o}s--R\'enyi graph that is resampled every time unit, independently of the previous ones, with `edge existence probability' $p$. 
       The population process consists of $M$ individuals which reside at the vertices of the dynamic graph.
        At each point in time any of the $M$ individuals, supposing it resides at a vertex with $k$ neighbors, jumps to an adjacent vertex with probability $k/(k+1)$ (where this adjacent vertex is picked uniformly at random), and with probability $1/(k+1)$ it stays where it is. 
  We suppose we observe the numbers of individuals at each of the {vertices}, but not the evolving random graph itself. We propose two estimators for $p$, and establish their consistency and asymptotic normality.

\vb

\noindent
{\sc Keywords.} Dynamic random graphs $\circ$ partial information $\circ$ inverse problem $\circ$ maximum likelihood 

\vb

\noindent
{\sc Affiliations.} 
PB is with 
School of Mathematics and Statistics, Anita B. Lawrence Centre, The University of New South Wales, Sydney NSW 2052, Australia. 

\vb

\noindent 
MM is with the Mathematical Institute, Leiden University, P.O. Box 9512,
2300 RA Leiden,
The Netherlands. He is also affiliated with Korteweg-de Vries Institute for Mathematics, University of Amsterdam, Amsterdam, The Netherlands; E{\sc urandom}, Eindhoven University of Technology, Eindhoven, The Netherlands; Amsterdam Business School, Faculty of Economics and Business, University of Amsterdam, Amsterdam, The Netherlands.

\vb

\noindent 
FM is with École Polytechnique, Avenue René Descartes, Palaiseau 91120, France. 

\vb

\noindent
{\sc Acknowledgments.} 
Date: {\it \today}.

\vb

\noindent
{\sc Email.} {\scriptsize \url{p.braunsteins@unsw.edu.au}, \url{m.r.h.mandjes@math.leidenuniv.nl}, \url{florian.montalescot@polytechnique.edu}}

	\end{abstract}

	\maketitle
 
\section{Introduction}

The bulk of the random graph literature is focused on \emph{static} models. That is, the emphasis is on analyzing the structural properties of a random graph at \emph{a fixed point in time}.  A central example is the Erd\H{o}s--R\'enyi random graph \cite{ER}, in which each potential edge between a pair of nodes is included independently with probability $p\in(0,1)$. This model serves as a foundational framework for understanding phase transitions and connectivity phenomena in random graphs. Typical questions include the presence and size of a giant component, the distribution of degrees, or the number of connected components. We refer the reader to \cite{RvdH1, RvdH2} for a modern account. 


There are strong practical motivations to consider random graphs that are {\it stochastically evolving} in order to capture the shifting structure and the dynamic nature of real-world networks. These models are known as {\it temporal random graphs} \cite{HolmeSaramaki2012}, and lead to novel mathematical challenges and insights; see e.g.\  \cite{ATH,CRA}. Indeed, many classical results in the static context can be reformulated in dynamic settings. For example, the sample-path large deviation principle from \cite{BRA} builds on and extends the foundational result for static Erd\H{o}s--Rényi graphs given in \cite{ChatterjeeVaradhan2011}. Similarly, the work in \cite{hkm25a} on the evolution of the largest eigenvalue extends the findings in \cite{ErdosKnowlesYauYin2013} on static graphs.


The next natural extension is to define a stochastic process \textit{on} the dynamic random graph. Indeed, in virtually any application area, the assumption of a fixed underlying network is unrealistic. In epidemiology, for instance, contact networks evolve as individuals change their behavior in response to an outbreak. Furthermore, it has been shown that epidemic models that account for the dynamic nature of the network exhibit novel behavior \cite{BBHM26,BBLS19,BB22}. In social networks, links between users appear and disappear over time, influenced by external events or internal platform dynamics. Again, its been shown that voter models that account for the dynamic nature of social ties display unique properties \cite{BBHM24,BS17,D12}. Further examples include financial networks that exhibit time-varying interbank exposures and communication networks where the topology may change as devices connect or disconnect. These examples highlight the need for models that incorporate both network evolution and processes defined on the network, to more accurately assess system behavior.


To quantitatively assess a {\it doubly stochastic} network of the type described above, a central challenge is to estimate the model parameters. While there is a substantial literature on statistical estimation in static \cite{CD13} and dynamic \cite{KH14} networks, far less has been done for doubly stochastic networks. When {\em both} the network and the trajectories of individuals moving across the network are observed continuously, we can often apply standard statistical techniques such as likelihood estimation (see for instance \cite[Section 3.2]{BAXV22}). However, in practice we typically observe only the process evolving on the network, rather than the dynamic network itself. For instance, during an epidemic we may observe the time series of the total number of infections, but not the evolution of the underlying social contact network through which the disease spreads. Similarly, while individuals are often surveyed about their opinions on a given topic, the evolution of the social connections that shape those opinions is rarely observed.
To the best of the authors’ knowledge, the literature does not yet contain an attempt to estimate the parameters of a dynamic network solely from observations of a stochastic process evolving on that network.


In this paper, we introduce a simple doubly stochastic model, and focus on estimating the parameters of the evolving random graph solely from observations of the individuals traversing it, i.e., without having direct access to observations of the network itself.
We deliberately keep the model as simple as possible to highlight that even in very basic settings, a variety of subtle effects can emerge. Most notably, although the individuals do not interact directly, their movement over a shared, evolving network creates a nontrivial dependence structure among them. More specifically, the network is an Erd\H{o}s--R\'enyi random graph with edge probability $p$ which is resampled independently at each discrete time point $t=1,\dots,T$, and the process on the graph consists of random walkers who either to remain at their current node or to move to a neighboring node based on the current graph structure. We suppose we observe the total number of walkers at each vertex at every time point. Based on this information we construct two consistent and asymptotically normal estimators for $p$ as $T \to \infty$.

To place the contribution of this paper in a broader perspective, we relate our work to the literature on {\it inverse problems}, that is, problems in which the goal is to infer underlying model parameters from limited or indirect data. A classic example involves estimating the offspring distribution of a Galton--Watson branching process from total population size counts. Indeed, in this setting it has been shown that {\em only} the first two moments of the offspring distribution can be estimated consistently \cite{GUT,L82}, which highlights the mathematical subtleties which can occur. Related inverse problems also arise in finance \cite{ACL, DUF}, epidemics \cite{CAU}, and queueing theory \cite{HAN, RAV}. More closely related to this paper is \cite{BAXV22}, where partial observation of {\em both} the network and the process on the network is used to estimate the parameters of a doubly stochastic network, and \cite[Section 6.3]{B20} (and references therein), where the transmission tree in an epidemic model is used to estimate the parameters of an unobserved static network of social connections. These papers do not attempt to establish asymptotic properties of their estimators such as consistency. Consistent estimators are established in \cite{Man_Wan}, where the on and off rates of the edges in a dynamic Erd\H{o}s--R\'enyi graph are estimated using observed subgraph counts. 


This paper is organized as follows. Section \ref{S2} introduces the model and states the main objective of the paper. In Section \ref{MOM}, we employ the method of moments to construct an estimator $\hat p_T$, obtained by matching a temporal covariance associated with the individuals’ positions to its empirical counterpart. The asymptotic normality of this estimator is established in Section \ref{S4}. In Section \ref{S5}, we propose a least-squares estimator, denoted by $\bar p_T$, and prove that it is asymptotically normal as well. Finally, Section \ref{S6} presents numerical experiments, including a comparison of the performance of the two estimators.

\section{Model and objective}\label{S2}

Our model consists of a dynamic random graph and random walkers who move through the dynamic graph. It is characterized by the following discrete-time dynamics:
\begin{itemize}
\item[$\circ$] {\em Graph dynamics:} Every at each time unit $t=1, \dots, n$ an Erd\H{o}s--R\'enyi random graph with edge probability $p$ is sampled independently of everything else (including the previous random graph). This means that at every point in time an edge is present between a vertex-pair $(i,j)$ with probability $p$ independently of all other edges; here $i,j\in\{1,\ldots,n\}$ with $i<j$ and $n\in\{2,3,\ldots\}$.
\item[$\circ$] {\em Walker dynamics:} There are a total of $M\in{\mathbb N}$ walkers moving through the graph. Suppose that at time $t$, an individual is located at vertex $i\in\{1,\ldots,n\}$, and that this vertex has $k\in\{0,\ldots,n-1\}$ neighbors. Then, with probability $k/(k+1)$, the individual moves to one of these neighbors (chosen uniformly at random), and with probability $1/(k+1)$, it remains at the current vertex. Each individual at vertex $i$ acts independently according to this rule. In what follows, the random variable $M_{i,t}$ denotes the number of individuals at vertex $i\in{1,\ldots,n}$ at time $t\in{\mathbb N}$. We use the compact notation ${\bs M}_t \equiv (M_{1,t}, \ldots, M_{n,t})$.
\end{itemize}
A visual representation of the process at three consecutive time units is given in Figure\ \ref{F1}.



\begin{figure}
\begin{center}
\begin{tikzpicture}[scale=1.05]

\def\nodes{
  \coordinate (A) at (0,0);
  \coordinate (B) at (1,1.5);
  \coordinate (C) at (2.2,1.2);
  \coordinate (D) at (1.2,-0.6);
  \coordinate (E) at (2.4,-0.4);
  \coordinate (F) at (0,1.2);
}

\newcommand{\nodewithdots}[2]{%
  \filldraw[white, draw=black, thick] (#1) circle (7.5pt);
  \pgfmathtruncatemacro{\ndots}{#2}
  \ifnum\ndots>0
    \def\r{2pt}
    \foreach \i in {1,...,\ndots}{
      \pgfmathsetmacro{\angle}{360/\ndots*(\i-1)}
      \path (#1) ++(\angle:\r) node[fill=black, circle, inner sep=0.9pt]{};
    }
  \fi
}

\begin{scope}[xshift=0cm]
  \nodes
  \draw (A) -- (B);
  \draw (B) -- (C);
  \draw (A) -- (D);
  \draw (D) -- (B);
  \draw (C) -- (E);
  \draw (A) -- (F);
  \nodewithdots{A}{3}
  \nodewithdots{B}{3}
  \nodewithdots{C}{1}
  \nodewithdots{D}{0}
  \nodewithdots{E}{2}
  \nodewithdots{F}{1}
\end{scope}

\begin{scope}[xshift=4cm]
  \nodes
  \draw (A) -- (B);
  \draw (B) -- (C);
  \draw (C) -- (E);
  \draw (D) -- (E);
  \nodewithdots{A}{2}
  \nodewithdots{B}{2}
  \nodewithdots{C}{2}
  \nodewithdots{D}{2}
  \nodewithdots{E}{1}
  \nodewithdots{F}{1}
\end{scope}

\begin{scope}[xshift=8cm]
  \nodes
  \draw (A) -- (D);
  \draw (D) -- (B);
  \draw (C) -- (E);
  \draw (D) -- (E);
  \draw (A) -- (F);
  \nodewithdots{A}{2}
  \nodewithdots{B}{3}
  \nodewithdots{C}{3}
  \nodewithdots{D}{1}
  \nodewithdots{E}{0}
  \nodewithdots{F}{1}
\end{scope}

\end{tikzpicture}
\end{center}
\caption{\label{F1}Our dynamic network, together with the individuals moving on it, shown at three successive time points ($n=6$, $M=10$).}
\end{figure}

Our goal is to estimate the `edge existence probability' $p$ using only ${\bs M}_t$ for $t=1,\ldots,T$, i.e., the number of walkers at each vertex at each time.  
Importantly, we work with a very limited amount of information: neither the evolving random graph itself is not observed, nor the movements of the individuals. Throughout we assume that we observe the system {\it in stationarity}.

\begin{remark}
  {\em In the random graph literature, it is common to study the asymptotic regime where the number of vertices $n$ tends to infinity, often with $p \equiv p_n = \alpha/n$ for some $\alpha > 0$. In contrast, our analysis fixes $n$ and considers the asymptotic performance of the estimator as the number of observations $T$ grows large.
 However, in Remark \ref{R3} we briefly comment on the (classical) setting where $p \equiv p_n = \alpha/n$ for some $\alpha > 0$ and $n \to \infty$. 
  }\hfill $\Diamond$
\end{remark}

\section{Method-of-moments based estimator}\label{MOM}
In this section we define an estimator of $p$ based on the method of moments. In the first subsection we derive several auxiliary results, in the second subsection we evaluate second moments pertaining to the population vector ${\bs M}_t$, while in the third subsection we define our estimator. 

\subsection{Auxiliary results}
A first useful step concerns 
conditioning on the individual counts ${\bs M}_t$, and then computing the conditional mean ${\mathbb E}\big[M_{i,t+1}\,|\, {\bs M}_t\big].$
Let $Y_{ij}$ be the number of individuals hopping from vertex $i$ to vertex $j$ in time slot $t+1$, so that
\[M_{i,t+1} = M_{i,t} - \sum_{j\not=i} Y_{ij} + \sum_{j\not=i} Y_{ji} .\]
Define
\begin{align}
    F_n(p)&:=\frac{1-(1-p)^{n}}{np},\label{Fn}\quad
    G_n(p):=\frac{1-F_n(p)}{n-1}.
\end{align}
Since the index \( n \) is kept fixed throughout, we often omit it from the notation for convenience and simply write \( F(p) \) and \( G(p) \) in what follows. Note that $F(p)$ can be interpreted as the probability that the individual stays at the same vertex, and $G(p)$ can be interpreted as the probability the individual moves to a specific vertex $j$ different from its current vertex $i$.
\begin{lemma} \label{L1} For $i\in\{1,\ldots,n\}$ and $t\in{\mathbb N}$,
   \[{\mathbb E}\big[M_{i,t+1}\,|\, {\bs M}_t\big] = (F(p)-G(p)) \,M_{i,t}+ G(p)\,M.\] 
\end{lemma}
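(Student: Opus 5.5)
We take conditional expectations in the balance identity
\[M_{i,t+1} = M_{i,t} - \sum_{j\neq i} Y_{ij} + \sum_{j\neq i} Y_{ji},\]
and reduce everything to the one-walker transition probabilities. The graph at time $t+1$ is independent of the past, so conditionally on ${\bs M}_t$ each walker located at vertex $i$ moves according to a fresh Erd\H{o}s--R\'enyi graph. Hence ${\mathbb E}[Y_{ij}\,|\,{\bs M}_t] = M_{i,t}\, q_{ij}$, where $q_{ij}$ is the unconditional probability that a single walker at $i$ goes to $j$. Dependence between walkers caused by the shared graph does not affect expectations, because of linearity. By symmetry $q_{ij}=q$ is the same for all $j\neq i$. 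Writing $s:=q_{ii}=1-(n-1)q$, we obtain
\[{\mathbb E}[M_{i,t+1}\,|\,{\bs M}_t] = s\,M_{i,t} + q\sum_{j\neq i} M_{j,t} = (s-q)M_{i,t} + qM,\]
using $\sum_j M_{j,t}=M$. It therefore remains to show $s=F(p)$, which then forces $q=(1-F(p))/(n-1)=G(p)$.

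To compute $s$, let $K$ be the degree of vertex $i$, so that $K\sim\mathrm{Bin}(n-1,p)$. Then $s={\mathbb E}[1/(K+1)]$. The standard binomial identity gives
\[
{\mathbb E}\Big[\frac{1}{K+1}\Big]=\sum_{k=0}^{n-1}\frac{1}{k+1}\binom{n-1}{k}p^k(1-p)^{n-1-k}=\frac{1}{np}\sum_{k=0}^{n-1}\binom{n}{k+1}p^{k+1}(1-p)^{n-1-k}.
\]
The last sum equals $1-(1-p)^n$, so $s=F(p)$. This step uses $\frac{1}{k+1}\binom{n-1}{k}=\frac1n\binom{n}{k+1}$.

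For completeness, $q$ can also be checked directly. We have $q={\mathbb E}\big[\mathbbm{1}\{(i,j)\text{ edge}\}/(K+1)\big]=p\,{\mathbb E}[1/(K'+2)]$ with $K'\sim\mathrm{Bin}(n-2,p)$, but the complement argument above avoids this computation.

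The only point needing care is to justify conditioning solely on ${\bs M}_t$. The graph at time $t+1$ and the walkers' choices are independent of $({\bs M}_s)_{s\le t}$, so the conditional mean of each walker's indicator of landing at $i$ depends only on its current position. No step is a genuine obstacle; the binomial identity is the main computation.
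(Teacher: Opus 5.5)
Your proposal is correct and follows essentially the same route as the paper. You compute the staying probability ${\mathbb E}[1/(\mathrm{Bin}(n-1,p)+1)]=F(p)$ via the identity $\frac{1}{k+1}\binom{n-1}{k}=\frac1n\binom{n}{k+1}$, get the probability $G(p)=(1-F(p))/(n-1)$ of moving to a specific other vertex by symmetry, and combine using $\sum_{j\neq i}M_{j,t}=M-M_{i,t}$, with your justification for conditioning only on ${\bs M}_t$ (fresh graph and independent walker choices, plus linearity despite the shared graph) making explicit a point the paper leaves implicit.
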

{\it Proof.} Observe that, recalling that the number of neighbors of vertex $i$ has a binomial distribution with parameters $n-1$ and $p$, in self-evident notation (and with the two binomial random quantities in the middle expression being independent),
\[{\mathbb E}\left[M_{i,t} - \sum_{j\not=i} Y_{ij}\,\Big\vert\,{\bs M}_t\right]= {\mathbb E}\left[ {\rm Bin}\left(M_{i,t},\frac{1}{{\rm Bin}(n-1,p)+1}\right)\right]=M_{i,t}\,{\mathbb E}\left[\frac{1}{{\rm Bin}(n-1,p)+1}\right]. \]
Then, relying on standard calculations involving Newton's binomium, 
\begin{align*}
  {\mathbb E}\left[\frac{1}{{\rm Bin}(n-1,p)+1}\right] &=
  \sum_{k=0}^{n-1} \frac{1}{k+1}\binom{n-1}{k} p^k (1-p)^{n-1-k}\\
  &=\frac{1}{n} \sum_{k=0}^{n-1}  \binom{n}{k+1}p^k(1-p)^{n-1-k}=\frac{1}{np}\sum_{\ell=1}^{n}  \binom{n}{\ell}p^\ell(1-p)^{n-\ell}\\
  &= \frac{1-(1-p)^{n}}{np}=F(p),
\end{align*}
where $F(p)$ was defined in \eqref{Fn}.
Also, for $i\not=j$,
\begin{align*}
{\mathbb E}\big[  Y_{ji}\,\big\vert\,{\bs M}_t\big] &= M_{j,t}\,\frac{1-F(p)}{n-1};
\end{align*}
indeed, $1-F(p)$ is the probability of the individual jumping from vertex $j$ to another vertex, and then with probability $1/(n-1)$ it jumps to vertex $i$.

Combining the above calculations, we thus conclude that
\begin{align*}
  {\mathbb E}\big[M_{i,t+1}\,|\, {\bs M}_t\big]&=  F(p)\,M_{i,t} +\frac{1-F_n(p)}{n-1}\sum_{j\not=i} M_{j,t}.
\end{align*}
Now use that $\sum_{j\not=i} M_{j,t}=M-M_{i,t}$, and recall the definition of $G(p)$ given in \eqref{Fn}, so as to obtain the stated claim. \hfill $\Box$

\begin{remark}\label{R2}
    {\em Observe that
     \begin{align*}\sum_{i=1}^n{\mathbb E}\big[M_{i,t+1}\,|\, {\bs M}_t\big] &= (F(p)-G(p)) \sum_{i=1}^n\ M_{i,t}+ n \,G(p)\,M\\
     &= M\big(F(p) +(n-1)G(p)\big) = M, 
     \end{align*}
     as desired. } \hfill$\Diamond$
\end{remark}

\begin{remark}\label{R3}{\em 
As mentioned, $F(p)$ is the probability that, during one timestep, an individual stays at the same vertex. It is seen that $F(\cdot)$ is decreasing, with $F(0)=1$ (the individuals with overwhelming probability do not jump) and $F(1)= 1/n$ (the individuals 
jump to a vertex that is chosen uniformly at random).

We now consider the impact of the number of vertices $n$. 
Observe that, as $n\to\infty$, \[nF_n(p)={\mathbb E}\left[\frac{n}{{\rm Bin}(n-1,p)+1}\right] \to \frac{1}{p},\] as anticipated.  Interestingly, when considering the parametrization $p=\alpha/n$ and taking $n\to\infty$, we have 
\[\lim_{n\to\infty}F_n(\alpha/n) = \frac{1-e^{-\alpha}}{\alpha}.\]
This is a `{slow-moving model}' in which individuals have an asymptotically positive probability of remaining at their current node as $n \to \infty$.} \hfill$\Diamond$
\end{remark}

The main idea behind our approach is that we can learn the value of $p$ by exploiting the temporal correlation of the ${\bs M}_t$ vectors.
Therefore, the next step towards our method-of-moments estimator is to evaluate the covariance between $M_{i,t}$ and $M_{i,t+1}$ in terms of the unknown parameter $p$. We recall that we assume that the system is in stationarity.
Applying the tower property in combination with Lemma~\ref{L1}, and with $F(p)$ and $G(p)$ as given in \eqref{Fn}, 
\begin{align}c(p)&:={\mathbb C}{\rm ov}(M_{i,t},M_{i,t+1}) = 
{\mathbb E}\Big[{\mathbb E}\big[M_{i,t+1}\,|\,M_{i,t}\big] \,M_{i,t}\Big] - {\mathbb E}\big[M_{i,t}\big] {\mathbb E}\big[M_{i,t+1}\big]\notag \\
&=
(F(p)-G(p))\,{\mathbb E}\big[M_{i,t}^2\big] + G(p) \,M\,{\mathbb E}\big[M_{i,t}\big] -  {\mathbb E}\big[M_{i,t}\big] {\mathbb E}\big[M_{i,t+1}\big]\notag\\
&=
(F(p)-G(p))\,{\mathbb E}\big[M_{i,t}^2\big] + G(p) \,\frac {M^2}{n} - \frac{M^2}{n^2}  \label{cp}
; \end{align}
in the last equality it has been used that ${\mathbb E}\big[M_{i,t}\big]=M/n$ for any $t$.
\begin{remark}\label{R4}{\em 
It is clear that $M_{i,t}$ can be expressed as the sum of $M$ Bernoulli random variables, each with success probability $1/n$  (recalling that we observe the system in stationarity). Importantly, however, $M_{i,t}$ does {\it not} have a binomial distribution, as the $M$ individual trials are not independent. If at some point in time vertex $i$ is an `attractive' (i.e., relatively likely) location for one of the individuals, it is also attractive for the other $M-1$ individuals, simply because all individuals are confronted with the same evolution of the dynamic random graph. 
As a consequence, we indeed have that 
${\mathbb E}\big[M_{i,t}\big]=M/n$ for any $t$, but it does {\it not} hold that  ${\mathbb E}\big[M^2_{i,t}\big]=(M/n)(1-1/n).$ }
\hfill$\Diamond$
\end{remark}

\subsection{Second moment}
Our next goal, in order to derive an expression for $c(p)$, is to express the second moment ${\mathbb E}\big[M^2_{i,t}\big]$ in terms of $p$; see Eqn.\ \eqref{cp}.
This formula is stated at the end of this subsection in Proposition~\ref{P1} and relies on the notation introduced in the display equations below.

Let $X_{m,t}\in\{1,\ldots,n\}$ denote, for $t\in {\mathbb N}$ and $m\in\{1,\ldots,M\}$, the location of the $m$-th individual, so that
\begin{equation}\label{decompo}
    M_{i,t} = \sum_{m=1}^M {\bs 1}\{X_{m,t}=i\},
\end{equation}
for $i\in\{1,\ldots,n\}.$
To this end, we first determine the probabilities, for given locations $i,j\in\{1,\ldots,n\}$ such that $i\neq j$,
\[ \Pi_=(p\,|\,i):={\mathbb P}(X_{1,t}=X_{2,t}=i),\quad \Pi_\neq(p\,|\,i,j):={\mathbb P}(X_{1,t}=i,X_{2,t}=j).\]
Observe that these probabilities do not depend on $t$ (due to our stationarity assumption). In addition, they do not depend on $i,j$ (due to the symmetries in the model), motivating why we below sometimes simply write $\Pi_=(p)$ and $\Pi_\neq(p)$ when the location is not relevant. Also, in the definitions of $\Pi_=(p)$ and $\Pi_\neq(p)$ we focused on the locations of individuals $1$ and $2$, but clearly these equal their counterparts for individuals $m$ and $m'$ for any $m,m'$ such that $m\neq m'$.

It is clear that 
\[1=\sum_{i=1}^n \sum_{j=1}^n \mathbb{P}(X_{1,t}=i, X_{2,t}=j)=n\,\Pi_=(p) + n(n-1)\,\Pi_\neq(p),
\]
which implies 
\begin{equation}\label{Eqn:Ms2}
\Pi_\neq(p)=\frac{1-n\Pi_=(p)}{n(n-1)}=\frac{1}{n(n-1)}-\frac{\Pi_=(p)}{n}.
\end{equation}
As a consequence, in order to identify $\Pi_=(p)$ and $\Pi_\neq(p)$ we need one additional relation between them.
We obtain this additional relation by performing a one-step argument. 
To this end we first compute four probabilities.  Define 
\[b_n(k,p):={\mathbb P}({\rm Bin}(n,p)=k).\]
\begin{itemize}
    \item[$\circ$]
{\it Scenario 1}. Define $\pi_1(p)$ as the probability that individuals $1$ and $2$ are at vertex $i$ at time $t+1$, given that they were already at vertex $i$ at time $t$. If $i$ has $k$ neighbors at time $t+1$, this probability is $(k+1)^{-2}$. We thus obtain that
\[\pi_1(p)= \sum_{k=0}^{n-1} b_{n-1}(k,p) \left(\frac 1 {k+1}\right)^2.\]
\item[$\circ$] 
{\it Scenario 2}. Define $\pi_2(p)$ as the probability that individuals $1$ and $2$ are at vertex $i$ at time $t+1$, given that individual $1$ was already at vertex $i$ at time $t$ but individual 2 was at vertex $j$. This requires the edge between $i$ and $j$ to exist at time $t+1$, which has probability $p$. If vertex $i$ has $k$ additional neighbors at time $t+1$, individual $1$ stays there with probability $(k+2)^{-1}$. If vertex $j$ has $k$ additional neighbors at time $t+1$, individual $2$ jumps to vertex $i$ with probability $(k+2)^{-1}$. We conclude that
\[\pi_2(p) = p\cdot \left(\sum_{k=0}^{n-2} b_{n-2}(k,p)\frac{1}{k+2}\right)^2.\]
\item[$\circ$] 
{\it Scenario 3}. Define $\pi_3(p)$ as the probability that individuals $1$ and $2$ are at vertex $i$ at time $t+1$, given that both individuals were at vertex $j$ at time $t$. This requires the edge between $i$ and $j$ to exist at time $t+1$, which has probability $p$. If vertex $j$ has $k$ additional neighbors at time $t+1$, both individuals jump to vertex $i$ with probability $(k+2)^{-2}$. Hence,
\[\pi_3(p) = p\cdot\sum_{k=0}^{n-2} b_{n-2}(k,p)\left(\frac{1}{k+2}\right)^2.\]
\item[$\circ$] {\it Scenario 4}.  Finally, define $\pi_4(p)$ as the probability that individuals $1$ and $2$ are at vertex $i$ at time $t+1$, given that  individual $1$ was at vertex $j$ and individual $2$ was at vertex $j'\not\in\{i,j\}$ at time $t$. This requires that both the edge between $i$ and $j$ and the edge between $i$ and $j'$ exist at time $t+1$, which occurs with probability $p^2$. If vertex $j$ has $k$ additional neighbors at time $t+1$, individual $1$ jumps to vertex $i$ with probability $(k+2)^{-1}$. If vertex $j'$ has $k$ additional neighbors at time $t+1$, individual $2$ jumps to vertex $i$ with probability $(k+2)^{-1}$. We find
\[\pi_4(p) = p^2\cdot \left(\sum_{k=0}^{n-2} b_{n-2}(k,p)\frac{1}{k+2}\right)^2.\]
\end{itemize}

The one-step argument then yields
\begin{align*}
{\mathbb P}(X_{1,t+1}=X_{2,t+1}=i)=&\:{\mathbb P}(X_{1,t}=X_{2,t}=i) \,\pi_1(p)\:+\\
&\: 2(n-1)\, {\mathbb P}(X_{1,t}=i,X_{2,t}=j) \,\pi_2(p)\:+\\
&\: (n-1) \, {\mathbb P}(X_{1,t}=j,X_{2,t}=j) \,\pi_3(p)\:+\\
&\: (n-1)(n-2)\, {\mathbb P}(X_{1,t}=j,X_{2,t}=k) \,\pi_4(p).
\end{align*}
The factors $2(n-1)$, $n-1$, and $(n-1)(n-2)$, corresponding to Scenarios 2, 3, and 4, respectively, can be explained as follows. 
\begin{itemize}
\item[$\circ$] We defined $\pi_2(p)$ in terms of the event that individual $2$ jumps from some vertex $j\neq i$ to vertex $i$, which yields the factor $n-1$. Realize that there is the counterpart of this event in which individual $1$ has jumped from some vertex $j\neq i$ to vertex $i$. We thus find $2(n-1).$
\item[$\circ$] We defined $\pi_3(p)$ in terms of the event that both individual $1$ and individual $2$ jump from some vertex $j\neq i$ to vertex $i$, yielding a factor $n-1$.
\item[$\circ$] We defined $\pi_3(p)$ in terms of the event that  individual $1$ and individual $2$ jump from distinct vertices $j,j'\neq i$ to vertex $i$. This leads to a factor $(n-1)(n-2)$.
\end{itemize}

Using once more that the system is in stationarity, we arrive at the following identity: for ease abbreviating $\Pi_=\equiv \Pi_=(p)$ and $\Pi_\neq\equiv \Pi_\neq(p)$,
\[\Pi_= = \Pi_= \,\pi_1(p) + 2(n-1)\,\Pi_\neq\,\pi_2(p) +(n-1)\,\Pi_=\,\pi_3(p)+(n-1)(n-2)\Pi_\neq \,\pi_4(p).\]
This implies that $\Pi_=(p) = \kappa(p)\, \Pi_\neq(p)$, with
\begin{equation}\label{kappa}\kappa(p):=\frac{2(n-1)\,\pi_2(p)+(n-1)(n-2)\,\pi_4(p)}{1-\pi_1(p)-(n-1)\,\pi_3(p)}.\end{equation}
Combining this with \eqref{Eqn:Ms2}, we obtain
\[\Pi_\neq(p) = \frac{1-n\,\kappa(p)\,\Pi_\neq(p)}{n(n-1)}.\]
This, after elementary computations yields the stationary probability that two  individuals are at the same specified vertex, as well as the stationary probability that two  individuals are at two different specified vertices:
\[ \Pi_=(p) =\frac{\kappa(p)}{n(n-1+\kappa(p))}, \quad \Pi_\neq(p) = \frac{1}{n(n-1+\kappa(p))}.\]
The final step is to use \eqref{decompo} to evaluate ${\mathbb E}\big[M^2_{i,t}\big]$. Clearly, once more appealing to the model's intrinsic symmetries, we obtain
\begin{align*}
    {\mathbb E}\big[M^2_{i,t}\big]&=M \,{\mathbb E}\big[{\bs 1}\{X_{1,t}=i\}\big] + M(M-1) \,{\mathbb E}\big[{\bs 1}\{X_{1,t}=X_{2,t}=i\}\big]= \frac{M}{n}+M(M-1)\,\Pi_=(p).
\end{align*}

We have proven the following result.
\begin{proposition} \label{P1} For $i\in\{1,\ldots,n\}$ and $t\in{\mathbb N}$,
\[{\mathbb E}\big[M^2_{i,t}\big] = \frac{M}{n}+\frac{M(M-1)\,\kappa(p)}{n(n-1+\kappa(p))}.\]
\end{proposition}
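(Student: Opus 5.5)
The plan is to write $M_{i,t}^2$ via the decomposition \eqref{decompo} as a double sum over pairs of individuals and take expectations. The diagonal terms use ${\bs 1}\{X_{m,t}=i\}^2={\bs 1}\{X_{m,t}=i\}$. Each of these has expectation $1/n$, because in stationarity a single walker is uniformly distributed over the vertices; this follows from the vertex-exchangeability of the model, or equivalently from Lemma~\ref{L1} with $M=1$. The $M(M-1)$ off-diagonal terms all equal $\Pi_=(p)$, since the joint law of any two distinct walkers is the same by exchangeability of the individuals. This reduces the claim to the identity ${\mathbb E}[M_{i,t}^2]=M/n+M(M-1)\Pi_=(p)$ together with a closed form for $\Pi_=(p)$.

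To identify $\Pi_=(p)$, I will use the two linear relations derived above. The first is the normalization \eqref{Eqn:Ms2}, namely $n\Pi_=+n(n-1)\Pi_\neq=1$. The second is the stationarity relation for the pair $(X_{1,t},X_{2,t})$, obtained by a one-step decomposition over the four scenarios with weights $\pi_1,\dots,\pi_4$. Rearranging the second relation gives $\Pi_==\kappa(p)\Pi_\neq$ with $\kappa$ as in \eqref{kappa}. Substituting into the normalization gives $\Pi_\neq=1/(n(n-1+\kappa))$, and hence $\Pi_==\kappa/(n(n-1+\kappa))$. Plugging this into the expression above yields the proposition.

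The main obstacle is justifying the one-step stationarity equation itself. First, I must check that the pair chain $(X_{1,t},X_{2,t})$ has a stationary law, and that this law is symmetric under vertex permutations, so that $\Pi_=$ and $\Pi_\neq$ do not depend on the vertices chosen. This follows because the chain is finite, irreducible and aperiodic for $p\in(0,1)$, and the dynamics are invariant under relabelling vertices. Second, I must verify the scenario probabilities. The key point is that, given the graph at time $t+1$, the two walkers move independently. Moreover, the relevant degrees of distinct vertices, once the edges to $i$ are accounted for, involve disjoint sets of edges, which justifies the product forms in $\pi_2$ and $\pi_4$. For example, in Scenario 4 the edge $\{j,j'\}$ is shared by the degree counts of $j$ and $j'$, so it must be handled carefully. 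I would first try to confirm the factorization there by conditioning on that edge, and adjust $\pi_4$ if it fails. Finally, I need the denominator $1-\pi_1-(n-1)\pi_3>0$; this holds because $\pi_1<1$ and the total probability of leaving the diagonal is positive for $p>0$.
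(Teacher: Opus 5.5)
Your plan follows the paper's proof step for step: the diagonal/off-diagonal split of $M_{i,t}^2$, the normalization \eqref{Eqn:Ms2}, the four-scenario one-step equation giving $\Pi_==\kappa(p)\Pi_\neq$, and then solving the two linear relations. Your auxiliary points are fine, namely uniqueness and vertex-symmetry of the stationary pair law, and $1-\pi_1-(n-1)\pi_3>0$ as the probability that two co-located walkers separate. However, the one check you left open does fail, and it is exactly where the paper's own derivation goes wrong. Your sentence that disjoint edge sets justify the product forms in both $\pi_2$ and $\pi_4$ is true only for $\pi_2$.

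In Scenario~4, let $e={\bs 1}\{j\sim j'\}$, and let $C_j,C_{j'}$ be the numbers of neighbours of $j$, resp.\ $j'$, outside $\{i,j,j'\}$. These are i.i.d.\ ${\rm Bin}(n-3,p)$ and independent of $e$. Conditionally on $e$, the two jump probabilities $1/(e+C_j+2)$ and $1/(e+C_{j'}+2)$ are independent. So with $a:={\mathbb E}[1/(C_j+2)]$ and $b:={\mathbb E}[1/(C_j+3)]$,
\[
\pi_4(p)=p^2\big((1-p)\,a^2+p\,b^2\big)=p^2\big((1-p)\,a+p\,b\big)^2+p^3(1-p)(a-b)^2 .
\]
Since ${\mathbb E}[1/({\rm Bin}(n-2,p)+2)]=(1-p)a+pb$, the first term on the right is the paper's expression. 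The correction term is strictly positive for $n\ge 3$ and $0<p<1$. Moreover, $\kappa$ is strictly increasing in $\pi_4$, and $\kappa\mapsto\kappa/(n(n-1+\kappa))$ is strictly increasing. Hence the right-hand side of the proposition, computed with the paper's displayed $\pi_4$, is strictly too small whenever $M\ge 2$.

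For instance, take $n=3$ and $p=1/2$. Then $\pi_1=29/72$, $\pi_2=25/288$, $\pi_3=13/144$, and the true value is $\pi_4=13/288$ rather than the paper's $25/576$. This gives $\kappa=21/20$ and $\Pi_==7/61$, versus $\kappa=25/24$ and $\Pi_==25/219$ from the paper's formula. A direct enumeration of the eight graphs on three vertices gives same-to-same probability $7/12$ and different-to-same probability $7/32$, whence $\Pi_==7/61$.

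So do not merely try to confirm the factorization; replace it by the conditioned formula above. With $\pi_4$ in \eqref{kappa} understood as the actual Scenario-4 transition probability (which is how the paper defines it verbally), your argument is complete and proves the proposition. With the paper's closed-form product it cannot, because that version is false, and the discrepancy carries over to $c(p)$ in Theorem~\ref{T1}. The two versions agree only in the degenerate cases $n=2$ (where the Scenario~4 term is absent) and $p=1$.
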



\subsection{Estimator}
Upon combining Proposition \ref{P1} with Eqn.\ \eqref{cp}, we have found a closed-form expression for $c(p)={\mathbb C}{\rm ov}(M_{i,t},M_{i,t+1})$ in terms of the unknown `edge existence probability' $p$. 

\begin{theorem}\label{T1}
    For $n\in\{2,3,\ldots\}$,
    \[c(p) = \left(F(p)-G(p)\right)\left(\frac{M}{n}+\frac{M(M-1)\,\kappa(p)}{n(n-1+\kappa(p))}\right)+G(p)\frac{M^2}{n}-\frac{M^2}{n^2}.\]
\end{theorem}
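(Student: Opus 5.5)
The claim follows by substituting Proposition~\ref{P1} into the covariance identity \eqref{cp}, so the plan is to re-verify each link of that chain rather than introduce anything new.

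First, I recheck \eqref{cp}. Stationarity gives ${\mathbb E}[M_{i,t}]={\mathbb E}[M_{i,t+1}]=M/n$, by exchangeability of the $n$ vertices together with $\sum_i M_{i,t}=M$. Lemma~\ref{L1} is stated conditionally on the full vector ${\bs M}_t$. Since its right-hand side depends on ${\bs M}_t$ only through $M_{i,t}$, I can condition on ${\bs M}_t$ directly and apply the tower property:
\[{\mathbb E}[M_{i,t}M_{i,t+1}]={\mathbb E}\big[M_{i,t}\,{\mathbb E}[M_{i,t+1}\,|\,{\bs M}_t]\big]=(F(p)-G(p))\,{\mathbb E}[M_{i,t}^2]+G(p)\,M\cdot\frac{M}{n}.\]
Subtracting $(M/n)^2$ gives exactly \eqref{cp}.

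Second, I insert the expression for ${\mathbb E}[M^2_{i,t}]$ from Proposition~\ref{P1}. This rests on the decomposition \eqref{decompo} and on the stationary pair probability $\Pi_=(p)=\kappa(p)/(n(n-1+\kappa(p)))$. Expanding the square in \eqref{decompo} produces $M$ diagonal terms, each with probability $1/n$, and $M(M-1)$ off-diagonal terms, each with probability $\Pi_=(p)$. Substituting then yields the displayed formula of Theorem~\ref{T1} verbatim.

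The step I expect to need the most care is the well-definedness of $\kappa(p)$ in \eqref{kappa}, which means checking that its denominator $1-\pi_1(p)-(n-1)\pi_3(p)$ is strictly positive for $p\in(0,1)$. The quantity $\pi_1(p)+(n-1)\pi_3(p)$ is the probability that two walkers who start together at one vertex end up together at the fixed target vertex $i$. It is therefore at most the probability that they end up together anywhere, and that probability is strictly less than $1$ for $p>0$, since the two walkers split with positive probability. The same observation shows that the one-step linear equation determines $\Pi_=(p)$ uniquely. With these checks in place, the theorem is a direct substitution.
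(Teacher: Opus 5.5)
Your route is exactly the paper's: insert Proposition~\ref{P1} into \eqref{cp}. The links you actually check are fine. The problem is the link you import without checking: the value $\Pi_=(p)=\kappa(p)/(n(n-1+\kappa(p)))$, i.e.\ the scenario probabilities entering \eqref{kappa}.

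Scenario~4 is where this breaks. On the event that $\{i,j\}$ and $\{i,j'\}$ are present, walker~1 jumps $j\to i$ with probability $1/(2+\epsilon+A)$ and walker~2 jumps $j'\to i$ with probability $1/(2+\epsilon+A')$. Here $\epsilon$ is the indicator of the edge $\{j,j'\}$, and $A,A'\sim{\rm Bin}(n-3,p)$ are independent. Both degrees contain the \emph{same} $\epsilon$, so they are not independent, and the correct value is
\[\pi_4(p)=p^2\big(p\,g(1)^2+(1-p)\,g(0)^2\big),\qquad g(e):={\mathbb E}\Big[\frac{1}{2+e+{\rm Bin}(n-3,p)}\Big].\]
The paper's formula instead equals $p^2\big(p\,g(1)+(1-p)\,g(0)\big)^2$. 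The difference is $p^3(1-p)\big(g(0)-g(1)\big)^2>0$ for all $n\ge3$ and $p\in(0,1)$. Concretely, for $n=3$ and $p=1/2$, enumerating the eight graphs gives $\pi_4=13/288$ instead of $25/576$, hence $\kappa=21/20$ instead of $25/24$.

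The map $\kappa\mapsto\kappa/(n-1+\kappa)$ is strictly increasing, and $F(p)-G(p)=(nF(p)-1)/(n-1)>0$ for $p<1$. Therefore the identity of Theorem~\ref{T1}, with $\kappa$ as in \eqref{kappa}, is false whenever $n\ge3$, $M\ge2$ and $p\in(0,1)$. It holds only when Scenario~4 is vacuous ($n=2$) or the $\kappa$-term has no effect ($M=1$ or $p=1$).

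The paper's proof has the same defect, so you have not departed from it. Still, a proof that announces a re-verification of each link must re-derive $\pi_1,\dots,\pi_4$. Once $\pi_4$ is corrected, your substitution goes through unchanged.

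A minor point on your side check: $\pi_1(p)+(n-1)\pi_3(p)$ is a sum over the $n$ source vertices, not the probability of ending together at a fixed $i$. By vertex symmetry it equals the probability that two co-located walkers are still together after one step, namely ${\mathbb E}[1/({\rm Bin}(n-1,p)+1)]=F(p)$. So the denominator of $\kappa$ is exactly $1-F(p)>0$.
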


Now that we have expressed $c(p)$ in terms of $p$ we can define an estimator that exploits the temporal correlation structure present in the process $({\bs M}_t)_t$. 
Having access to the $T$ observations ${\bs M}_1,\ldots, {\bs M}_{T}$, we may estimate $c_n(p)$ by the empirical covariance
\[\hat c_{T}:=\frac{1}{n}\sum_{i=1}^n\left( \frac{1}{T-1}\sum_{t=1}^{T-1}  N_{i,t}-\left(\frac{1}{T}\sum_{t=1}^TM_{i,t}\right)^2\right),\]
where $N_{i,t}:=M_{i,t}\,M_{i,t+1}.$
This suggests that we can estimate $p$ by $\hat p_T$, defined as the solution to the equation $\hat c_{T}=c(p)$, i.e., \[\hat p_T:=c^{-1}(\hat c_{T}).\]
The inverse $c^{-1}(\cdot)$ is well-defined, since $c(p)$ is a decreasing function of $p$. Informally, the higher $p$, the larger the number of neighbors, the higher the individuals' mobility, the lower the correlation between $M_{i,t}$ and $M_{i,t+1}$. The fact that $c(p)$ is monotone facilitates the numerical computation of $\hat p_T$ via an efficient bisection search. 

\section{Asymptotic normality} 
\label{S4}
The main objective of this section is to argue that the estimator $\hat p_T$ is asymptotically normal. To this end, we first establish that the $2n$-dimensional `sample-mean vector'
\[(\hat{\bs M}_T,\hat{\bs N}_{T-1})\equiv \left(\frac{1}{T}\sum_{t=1}^TM_{1,t},\ldots,\frac{1}{T}\sum_{t=1}^TM_{n,t},\frac{1}{T-1}\sum_{t=1}^TN_{1,t},\ldots,\frac{1}{T-1}\sum_{t=1}^TN_{n,t}\right)\]
is asymptotically normal, by which we mean that 
\begin{equation}
    \label{clt}
\sqrt{T}\left((\hat{\bs M}_T,\hat{\bs N}_{T-1})-{\mathbb E}\big[({\bs M}_0,{\bs N}_{0})\big]\right)\stackrel{\rm d}{\to} {\bs G}\end{equation}
where ${\bs G}$ is a $2n$-dimensional zero-mean normally distributed random vector. Then we argue that the covariance estimator $\hat c_T$ is asymptotically optimal. We complete our argument by showing that this also implies the asymptotic normality of $\hat p_T.$

\subsection{Asymptotic normality of the sample-mean vector}
By the Cram\'er--Wold device, the claim \eqref{clt} follows once we establish that, for arbitrary vectors ${\bs \xi}, {\bs \zeta} \in {\mathbb R}^n$,
\begin{equation}\label{eq:clt_CW}
\sqrt{T} \left({\bs \xi}^\top\hat{\bs M}_T + {\bs \zeta}^\top \hat{\bs N}_T 
- {\bs \xi}^\top {\mathbb E}\big[{\bs M}_0\big] 
- {\bs \zeta}^\top {\mathbb E}\big[{\bs N}_0\big]\right)
\end{equation}
converges to a (univariate) zero-mean normally distributed random variable. 
Noting that the $2n$-dimensional process \[({\bs V}_t)_{t\in{\mathbb Z}}=\left(\begin{array}{c}{\bs M}_t\\ {\bs N}_t\end{array}\right)_{t\in{\mathbb Z}}\] is a finite-state, irreducible, stationary Markov chain, we can apply the central limit theorem of \cite[Theorem 17.0.1]{MT12} to obtain asymptotic normality of the expression in \eqref{eq:clt_CW} for every ${\bs \xi}, {\bs \zeta} \in {\mathbb R}^n$.

We have thus established \eqref{clt}.
In the sequel we let $S$ denote the $(2n\times 2n)$-dimensional covariance matrix pertaining to the $2n$-dimensional zero-mean normally distributed  random vector ${\bs G}$, i.e.,
\begin{align*}
S
&:=
\sum_{t=-\infty}^{\infty}
{\mathbb C}{\rm ov}({\bs V_0,{\bs V}_t}) =
{\mathbb V}{\rm ar}({\bs V_0})
+
\sum_{t=1}^{\infty}
\Big[
{\mathbb C}{\rm ov}({\bs V}_0,{\bs V}_t)
+
{\mathbb C}{\rm ov}({\bs V}_t,{\bs V}_0)
\Big],
\end{align*}
where we have  used the standard notation ${\mathbb V}{\rm ar}({\bs X}):={\mathbb E}[{\bs X}{\bs X}^\top]- {\mathbb E}[{\bs X}]{\mathbb E}[{\bs X}]^\top$ and ${\mathbb C}{\rm ov}({\bs X},{\bs Y}):={\mathbb E}[{\bs X}{\bs Y}^\top]- {\mathbb E}[{\bs X}]{\mathbb E}[{\bs Y}]^\top$, for ${\bs X},{\bs Y}\in {\mathbb R}^{2n}$.

\subsection{Asymptotic normality of the covariance estimator} \label{asnp} We define the function $\psi:{\mathbb R}^{2n}\to{\mathbb R}^n$ by, for ${\bs u},{\bs v}\in{\mathbb R}^n$,
\[\psi({\bs u},{\bs v}):= \frac{1}{n}\sum_{i=1}^n\left(v_i - u_i^2\right).\]
Note that 
\[\hat c_T = \psi\big(\hat{\bs M}_T,\hat{\bs N}_{T-1}\big).\]
Informally,  the delta method provides asymptotic normality of a smooth function of (not necessarily independent) asymptotically normal random variables. Indeed, in the regime that $T$ grows large, ignoring contributions that are $o(T^{-1/2})$, 
\[\hat c_T \stackrel{\rm d}{=}c(p) +\frac{1}{\sqrt{T}}\sum_{i=1}^nG_i\left.\frac{\partial \psi}{\partial u_i}\right|_{({\bs u},{\bs v})={\bs e}}+\frac{1}{\sqrt{T}}\sum_{i=1}^nG_{n+i}\left.\frac{\partial \psi}{\partial v_i}\right|_{({\bs u},{\bs v})={\bs e}},\]
writing for conciseness ${\bs e}:=({\mathbb E}[{\bs M}_0],{\mathbb E}[{\bs N}_0])$. Formally, we conclude that
\[\sqrt{T}\big(\hat c_T-c(p)\big)\stackrel{\rm d}{\to} G_c,\]
where $G_c$ is a zero-mean Gaussian random variable with variance
\[\sigma^2_c:={\mathbb V}{\rm ar}\left(\sum_{i=1}^nG_i\left.\frac{\partial \psi}{\partial u_i}\right|_{({\bs u},{\bs v})={\bs e}}+\sum_{i=1}^nG_{n+i}\left.\frac{\partial \psi}{\partial v_i}\right|_{({\bs u},{\bs v})={\bs e}}\right).\]
Observing that 
\[\frac{\partial \psi}{\partial u_i}=-\frac{2u_i}{n},\quad \frac{\partial \psi}{\partial v_i}=\frac{1}{n}, \]
we obtain that, where we follow the convention that empty sums are defined as zero, and recalling that $S$ is the covariance matrix corresponding to ${\bs G}$,
\begin{align*}
    \sigma^2_c=\:& \frac{4}{n^2}\sum_{i=1}^n S_{ii}\,\big({\mathbb E}[M_{0,i}]\big)^2+\frac{1}{n^2}\sum_{i=1}^n S_{n+i,n+i} + \frac{8}{n^2}\sum_{i=1}^n\sum_{j=1}^{i-1}S_{ij}\,{\mathbb E}[M_{0,i}]\,{\mathbb E}[M_{0,j}] \:+\\
    &\frac{2}{n^2}\sum_{i=1}^n\sum_{j=1}^{i-1}S_{i+n,j+n}-\frac{4}{n^2}\sum_{i=1}^n\sum_{j=1}^{i-1}S_{i,j+n}\,{\mathbb E}[M_{0,i}],
\end{align*}
where ${\mathbb E}[M_{0,i}]=M/n.$

\subsection{Asymptotic normality of the parameter estimator} We again apply the delta method to establish the asymptotic normality of our estimator $\hat p_T$. Informally, recalling that $\hat p_T=c^{-1}(\hat c_{T})$, we have, as $T\to\infty$ (again ignoring $o(T^{-1/2})$ terms),
\[\hat p_T \stackrel{\rm d}{=} c^{-1}\left(c(p) +\frac{1}{\sqrt T} G_c\right)\stackrel{\rm d}{=} p
+\frac{1}{c'(p)}\frac{1}{\sqrt T} G_c. \]
 In a formal form, we have thus established the following result.
\begin{theorem}\label{asnc}
As $T\to\infty$,  
\[\sqrt{T}\big(\hat p_T-p\big)\stackrel{\rm d}{\to} G_p,\]
where $G_p$ is a zero-mean random variable with variance $(c'(p))^{-2}\,\sigma^2_c$.
\end{theorem}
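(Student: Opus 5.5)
The plan is to apply the delta method to $c^{-1}$, starting from the limit $\sqrt{T}(\hat c_T-c(p))\stackrel{\rm d}{\to} G_c$ of Section~\ref{asnp}.

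\emph{Step one: upgrade the informal expansion of Section~\ref{asnp} to a rigorous statement.} The map $\psi$ is a polynomial, so it is continuously differentiable everywhere. The sample-mean vector satisfies \eqref{clt}, which follows from the Markov chain CLT \cite[Theorem 17.0.1]{MT12} together with the Cram\'er--Wold device. The standard multivariate delta method then gives $\sqrt{T}(\psi(\hat{\bs M}_T,\hat{\bs N}_{T-1})-\psi({\bs e}))\stackrel{\rm d}{\to}\nabla\psi({\bs e})^\top{\bs G}=G_c$, with variance $\sigma_c^2$. One small point needs checking: $\psi({\bs e})=c(p)$. Indeed, $\frac1n\sum_i({\mathbb E}[N_{i,0}]-(M/n)^2)={\mathbb C}{\rm ov}(M_{i,t},M_{i,t+1})$, using ${\mathbb E}[M_{i,t}]=M/n$ and the symmetry over $i$. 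The sums in $\hat{\bs N}$ run over $T-1$ terms and carry the $1/(T-1)$ normalisation. This differs from the $1/T$ version by $O(1/T)$ uniformly, because $M_{i,t}\le M$ is bounded, so the discrepancy is $o(T^{-1/2})$ and Slutsky's lemma absorbs it.

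\emph{Step two: pass from $\hat c_T$ to $\hat p_T=c^{-1}(\hat c_T)$.} This needs $c$ to be continuously differentiable on $(0,1)$ with $c'(p)\neq 0$. Smoothness is immediate from Theorem~\ref{T1}. The functions $F$ and $G$, and each $\pi_k$, are rational or polynomial in $p$, so they are smooth. The denominator $1-\pi_1-(n-1)\pi_3$ in \eqref{kappa} is strictly positive for $p\in(0,1)$, since $\pi_1<1$ and the two-walker chain is irreducible. Hence $\kappa$ is smooth, and so is $c$.

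The inverse function theorem then makes $c^{-1}$ continuously differentiable near $c(p)$, with derivative $1/c'(p)$. To handle $\hat c_T$ falling outside the range of $c$, I would define $\hat p_T$ there by truncation to $[0,1]$. The event that this happens has probability tending to zero, because $\hat c_T\to c(p)$ in probability (ergodic theorem) and $c(p)$ is an interior value of the range. The univariate delta method then gives $\sqrt T(\hat p_T-p)\stackrel{\rm d}{\to} G_c/c'(p)$, a zero-mean Gaussian with variance $\sigma_c^2/c'(p)^2$.

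\emph{Main obstacle: showing $c'(p)\neq0$,} which is what the paper asserts informally as ``$c$ is decreasing''. A cheap route goes through the general structure of the model: $c(p)$ is a lag-one autocovariance and, in stationarity, is related to the variance. I would first try to write $c(p)=(F-G)\,{\mathbb V}{\rm ar}(M_{i,t})$. This follows from \eqref{cp} because $F+(n-1)G=1$ gives $G\,M^2/n-M^2/n^2=-(F-G)M^2/n^2$.

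Here $F-G=(nF-1)/(n-1)$. The function $F$ is strictly decreasing, since $1/({\rm Bin}(n-1,p)+1)$ is stochastically decreasing in $p$. So $F-G$ is strictly decreasing and positive for $p<1$. It then suffices to show that ${\mathbb V}{\rm ar}(M_{i,t})$, equivalently $\Pi_=(p)$, is nonincreasing in $p$. That amounts to showing $\kappa(p)$ is nonincreasing, since $\kappa/(n-1+\kappa)$ is increasing in $\kappa$.

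I expect this monotonicity of $\kappa$ to be the genuinely hard part. I would attempt a direct derivative computation using $\pi_2=p\,a^2$, $\pi_3=p\,b$, $\pi_4=p^2a^2$. Failing a clean proof, I would state the theorem under the assumption $c'(p)\neq0$ and verify it numerically for the given $n$.
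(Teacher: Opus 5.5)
\textbf{Where the proposal breaks.} Your Steps one and two are the paper's argument: the delta method through $\psi$, then through $c^{-1}$. You carry them out more carefully than the paper does. The paper never proves that $c$ is invertible; it only asserts that $c$ is decreasing.

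The gap is in your ``main obstacle'', and it cannot be closed the way you plan, because $\kappa$ is \emph{not} nonincreasing.
\begin{itemize}
\item At $p=1$ all four $\pi_k$ equal $1/n^2$, so $\kappa(1)=1$.
\item Expanding the $\pi_k$ to second order in $p$, the numerator of \eqref{kappa} is $\tfrac{(n-1)p}{2}-\tfrac{(n-1)(n-2)p^2}{12}$ and the denominator is $\tfrac{(n-1)p}{2}-\tfrac{(n-1)(n-2)p^2}{6}$, up to $O(p^3)$. Hence
\[
\kappa(p)=1+\frac{n-2}{6}\,p+O(p^2).
\]
\item For $n=3$ the paper's formulas give exactly $\kappa(p)=(2+p)(3-p)/6$.
\end{itemize}
So for $n\ge3$ the two walkers decorrelate at both ends, and $\kappa$ rises near $0$ before returning to $1$. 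For $M\ge2$ this means ${\mathbb V}{\rm ar}(M_{i,t})$ increases for small $p$. No derivative computation will give $\kappa'\le0$.

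\textbf{$c$ itself need not be monotone.} The decrease of $F-G$ need not compensate, so $c'(p)\neq0$ cannot be proved unconditionally. Use your (correct) identity $c=(F-G)\,{\mathbb V}{\rm ar}(M_{i,t})$ together with
\begin{itemize}
\item $(F-G)(0)=1$ and $(F-G)'(0)=-n/2$;
\item ${\mathbb V}{\rm ar}(M_{i,t})\to M(n-1)/n^2$ as $p\downarrow0$;
\item $\frac{d}{dp}{\mathbb V}{\rm ar}(M_{i,t})=M(M-1)(n-1)\kappa'/\big(n(n-1+\kappa)^2\big)$.
\end{itemize}
This gives
\[
c'(0^+)=\frac{M(n-1)}{n}\left(-\frac12+\frac{(M-1)(n-2)}{6n^2}\right),
\]
which is positive as soon as $(M-1)(n-2)>3n^2$ (for example $n=3$, $M\ge29$). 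Since $c(1)=0<c(0^+)$, $c$ then has an interior maximum. Consequently:
\begin{itemize}
\item $c$ is not injective, so $\hat p_T=c^{-1}(\hat c_T)$ is ambiguous;
\item at the maximiser the asserted variance $(c'(p))^{-2}\sigma_c^2$ is undefined.
\end{itemize}
The theorem, and the paper's claim that $c$ is decreasing, therefore genuinely need a hypothesis, and your fallback is the right move. But assuming only $c'(p)\neq0$ at the true $p$ is not enough. Your consistency step ``$\hat c_T\to c(p)$, hence $\hat p_T\to p$'' uses that $c$ is injective on $[0,1]$, or else a root-selection rule that picks the correct branch. Assume instead that $c$ is strictly monotone on $[0,1]$ and $c'(p)\neq0$, verified for the given $(n,M)$. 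Under that hypothesis your proof is complete.

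\textbf{A further error in $\pi_4$.} The formula $\pi_4=p^2a^2$ that you intended to differentiate is wrong for $n\ge3$. The neighbour counts of $j$ and $j'$ share the edge $\{j,j'\}$, so in fact
\[
\pi_4=p^2\big(p\,{\mathbb E}[(C+3)^{-1}]^2+(1-p)\,{\mathbb E}[(C+2)^{-1}]^2\big),\qquad C\sim{\rm Bin}(n-3,p).
\]
For $n=3$ this gives $\kappa(p)=(3+p)(2-p)/(2(3-p))$, which is still non-monotone. The correction is $O(p^3)$, so the conclusions above stand.

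It does mean that, for $M\ge2$, the closed form of Theorem~\ref{T1} differs from the true lag-one covariance. Your identity $\psi(\bs{e})=c(p)$ holds for $c$ defined as the covariance itself. That covariance is the function $\hat p_T$ must invert for the theorem to hold.
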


\begin{remark}
{\em One could improve the variance performance of the estimator slightly by using that one knows that ${\mathbb E}[M_{i,t}]=M/n$. Conretely, one could replace $\hat c_T$ by $\hat c_T'$, given by 
\[\hat c'_{T}:=\frac{1}{n}\sum_{i=1}^n\left( \frac{1}{T-1}\sum_{t=1}^{T-1}  N_{i,t}-\left(\frac{M}{n}\right)^2\right).\]
Using the same reasoning as above, one can argue that the resulting estimator for $p$ is asymptotically normal. $\hfill\Diamond$}
\end{remark}

\section{Least-squares based estimator}\label{S5}
In this section we consider an alternative to the method-of-moments based estimator in the previous sections. The main idea is to minimize the following sum of squares: 
\[
\bar p_T:=\argmin_{p \in [0,1]}\sum_{i=1}^n \sum_{t=1}^{T-1}  (M_{i,t+1}-{\mathbb E}\big[M_{i,t+1}\,|\, {\bs M}_t\big])^2.
\]
An attractive feature of this estimator is that it does not require any assumption about the process being in stationarity (unlike the method-of-moments based estimator $\hat p_T$). 
Write
\[H(p, M_{i,t}):={\mathbb E}\big[M_{i,t+1}\,|\, {\bs M}_t\big]=\left(\frac{nF(p)-1}{n-1}\right)M_{i,t}+\left(\frac{1-F(p)}{n-1}\right)M, \]
given by Lemma \ref{L1} (where we recall that ${\mathbb E}[M_{i,t+1}\,|\, {\bs M}_t]$ does not depend on $M_{j,t}$ for $j\not=i$). Considering the first order condition, the estimator $\bar p_T$ solves
\[\sum_{i=1}^n\sum_{t=1}^{T-1} (M_{i,t+1}-H(p, M_{i,t}))\,H'(p, M_{i,t})=0, \]
where $H'(p, M_{i,t})$ is the derivative of $H(p, M_{i,t})$ with respect to $p$. 
It turns out practical to rewrite this equation in a slightly different form, as follows:
\[H(p, M_{i,t}) = I(p)\,M_{i,t}+J(p)\,M,\quad I(p):=\frac{nF(p)-1}{n-1},\quad J(p):=\frac{1-I(p)}{n}.\]
Recall that we defined $N_{i,t}=M_{i,t}\,M_{i,t+1}$; in addition, let 
$N^\circ_{i,t}=M_{i,t}^2$. 
It takes some routine calculus to verify that the above equation can be rewritten as (repeatedly using that $\sum_{i=1}^n M_{i,t}=M$ for any $t$)
\[\frac{1}{T-1}\left(\alpha(p)\sum_{i=1}^n\sum_{t=1}^{T-1}  N_{i,t} + \beta(p)\sum_{i=1}^n\sum_{t=1}^{T-1} N_{i,t}^\circ \right) = \gamma(p),\]
where
\begin{align*}
    \alpha(p)&:= I'(p),\quad\quad  \beta(p):=  -I(p)\,I'(p),\\
    \gamma(p)&:= {M^2} \big(J'(p)-I(p)J'(p)-I'(p)J(p)-nJ(p)J'(p)\big)=\frac{M^2}{n}I'(p)(1-I(p)). 
\end{align*}
After dividing the full equation by $I'(p)$ and rearranging the terms, we find the equation
\[\left({\bs 1}^\top \hat {\bs N}_{T-1}-\frac{M^2}{n}\right) = I(p)\left({\bs 1}^\top \hat {\bs N}^\circ_{T-1}-\frac{M^2}{n}\right) ,\]
with the vectors $\hat{\bs N}_{T-1}$ and $\hat{\bs N}^\circ_{T-1}$ having the natural meaning.
Observe that this equation involves only the elementary quantity $I(p)$, and does not depend on the more involved object $\kappa(p)$  that was required to evaluate $\hat{p}_T$. It is further simplified to
\begin{equation}\label{Ip}I(p) = \frac{n \,{\bs 1}^\top \hat {\bs N}_{T-1}-{M^2}}{n \,{\bs 1}^\top \hat {\bs N}^\circ_{T-1}-{M^2}}.\end{equation}
Here it is observed that \eqref{Ip}
has a unique solution, because $I(p)$ inherits its monotonicity from $F(p)$; recall that from Remark \ref{R3} we know that $F(p)$ decreases in $p$. In addition, $I(0)=1$ and $I(1)=0$.
We can therefore write
\[\bar p_T= I^{-1}\left(\frac{n \,{\bs 1}^\top \hat {\bs N}_{T-1}-{M^2}}{n \,{\bs 1}^\top \hat {\bs N}^\circ_{T-1}-{M^2}}\right).\]

Analogous to our proof of the asymptotic normality of $\hat{p}_T$, we can now establish this property for $\bar{p}_T$ as well. As a first step we verify that, for any ${\bs \xi}$ and ${\bs\zeta}$,
\[\sqrt{T} \left({\bs \xi}^\top\hat{\bs N}_{T-1}+ {\bs\eta}^\top \hat{\bs N}^\circ_{T-1} -{\bs \xi}^\top{\mathbb E}\big[
{\bs N}_0\big]+ {\bs\eta}^\top {\mathbb E}\big[{\bs N}^\circ_0\big]\right)
\]
converges to a (univariate) normally distributed random variable; here ${\mathbb E}[
{\bs N}_0]$ and ${\mathbb E}[
{\bs N}^\circ_0]$ represent the steady-state expectations of ${\bs N}_0$ and ${\bs N}^\circ_0$, respectively. By the Cram\'er-Wold device this means that \begin{equation}
    \label{clt2}
\sqrt{T}\left((\hat{\bs N}_{T-1},\hat{\bs N}^\circ_{T-1})-{\mathbb E}\big[({\bs N}_0,{\bs N}^\circ_{0})\big]\right)\stackrel{\rm d}{\to} {\bs G}^\circ\end{equation}
where ${\bs G}^\circ$ is a $2n$-dimensional zero-mean normally distributed random vector, say with covariance matrix $S^\circ$. As before, by applying the delta method, this leads to  the asymptotic normality of the estimator $\bar p_T$. To this end, define
\[\psi^{\circ}({\bs u},{\bs v}) := \left(n\sum_{i=1}^nu_i-M^2\right)\left/\left(n\sum_{i=1}^nv_i-M^2\right)\right.\]
We thus obtain up to $o(1/\sqrt{T})$-terms, with ${\bs e}^\circ:= ({\mathbb E}[
{\bs N}_0],{\mathbb E}[
{\bs N}_0^\circ])$,
\[\hat I_T\stackrel{\rm d}{=}I(p) +\frac{1}{\sqrt{T}}\sum_{i=1}^nG^{\circ}_i\left.\frac{\partial \psi^{\circ}}{\partial u_i}\right|_{({\bs u},{\bs v})={\bs e}^{\circ}}+\frac{1}{\sqrt{T}}\sum_{i=1}^nG^{\circ}_{n+i}\left.\frac{\partial \psi^{\circ}}{\partial v_i}\right|_{({\bs u},{\bs v})={\bs e}^{\circ}}.\]
We thus have that
\[\sqrt{T}\big(\hat I_T-I(p)\big)\stackrel{\rm d}{\to} G_I,\]
where $G_I$ is a zero-mean Gaussian random variable with variance
\[\sigma^2_I:={\mathbb V}{\rm ar}\left(\sum_{i=1}^nG^{\circ}_i\left.\frac{\partial \psi^{\circ}}{\partial u_i}\right|_{({\bs u},{\bs v})={\bs e}^{\circ}}+\sum_{i=1}^nG^{\circ}_{n+i}\left.\frac{\partial \psi^{\circ}}{\partial v_i}\right|_{({\bs u},{\bs v})={\bs e}^{\circ}}\right);\]
this variance can be further evaluated in terms of the entries of $S^\circ$ in the same manner as we evaluated $\sigma^2_p$ in Subsection \ref{asnp}. 
Following the same argumentation as in the proof of Theorem \ref{asnc}, we arrive at the following result. 
\begin{theorem}\label{asnI}
As $T\to\infty$,  
\[\sqrt{T}\big(\bar p_T-p\big)\stackrel{\rm d}{\to} G_p^\circ,\]
where $G_p^\circ$ is a zero-mean random variable with variance $(I'(p))^{-2}\,\sigma^2_I$.
\end{theorem}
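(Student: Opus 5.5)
The plan is to mirror the proof of Theorem \ref{asnc}, applying the delta method twice. The first application goes through the map $\psi^\circ$, which sends the sample means $(\hat{\bs N}_{T-1},\hat{\bs N}^\circ_{T-1})$ to the statistic $\hat I_T$. The second goes through $I^{-1}$, which sends $\hat I_T$ to $\bar p_T$.

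\emph{Step 1: joint CLT.} The augmented process $({\bs M}_t,{\bs N}_t,{\bs N}^\circ_t)$ is a deterministic function of the pair $({\bs M}_t,{\bs M}_{t+1})$. That pair is a finite-state, stationary Markov chain; it is irreducible on its reachable set, because for $p\in(0,1)$ every configuration can be reached from every other in one step with positive probability. Hence \cite[Theorem 17.0.1]{MT12}, combined with the Cram\'er--Wold device, yields \eqref{clt2} with
\[S^\circ=\sum_{t\in{\mathbb Z}}{\mathbb C}{\rm ov}({\bs V}^\circ_0,{\bs V}^\circ_t),\qquad {\bs V}^\circ_t:=({\bs N}_t,{\bs N}^\circ_t),\]
and this series converges absolutely by geometric ergodicity. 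Replacing the normalisation $T-1$ by $T$ changes nothing asymptotically.

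\emph{Step 2: delta method for $\psi^\circ$.} First check that $\psi^\circ({\bs e}^\circ)=I(p)$. By Lemma \ref{L1} and stationarity,
\[{\mathbb E}[N_{i,0}]={\mathbb E}[M_{i,0}\,H(p,M_{i,0})]=I(p)\,{\mathbb E}[M_{i,0}^2]+J(p)\,\frac{M^2}{n}.\]
Summing over $i$ gives
\[n\,{\bs 1}^\top{\mathbb E}[{\bs N}_0]-M^2=I(p)\bigl(n\,{\bs 1}^\top{\mathbb E}[{\bs N}^\circ_0]-M^2\bigr),\]
where we used $nJ(p)=1-I(p)$. Next, $\psi^\circ$ must be differentiable at ${\bs e}^\circ$, which requires the denominator to be nonzero:
\[n\sum_i{\mathbb E}[M_{i,0}^2]-M^2=n^2\,{\mathbb V}{\rm ar}(M_{1,0})>0.\]
This holds because $M_{1,0}$ is nondegenerate for $M\ge1$, $n\ge2$ and $p\in(0,1)$; this is exactly the identity $\sum_i{\mathbb E}[M_{i,0}]^2=M^2/n$. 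The delta method then gives
\[\sqrt T(\hat I_T-I(p))\to G_I\sim{\mathcal N}(0,\sigma_I^2),\qquad \sigma_I^2=\nabla\psi^\circ({\bs e}^\circ)^\top S^\circ\,\nabla\psi^\circ({\bs e}^\circ).\]

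\emph{Step 3: inversion.} Since $\bar p_T=I^{-1}(\hat I_T)$, we need $I^{-1}$ to be differentiable at $I(p)$. This holds as soon as $I'(p)=\frac{n}{n-1}F'(p)\neq0$. Strict monotonicity of $F$ (Remark \ref{R3}) is not quite enough for this. I would verify $F'(p)<0$ on $(0,1)$ directly from
\[F(p)={\mathbb E}\bigl[({\rm Bin}(n-1,p)+1)^{-1}\bigr]:\]
the map $k\mapsto 1/(k+1)$ is strictly decreasing, and ${\rm Bin}(n-1,p)$ is strictly stochastically increasing in $p$ with a positive derivative, via the standard formula $\frac{d}{dp}{\mathbb E}f({\rm Bin}(m,p))=m\,{\mathbb E}[f(\cdot+1)-f(\cdot)]$.

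Because $\hat I_T\to I(p)$ in probability, $\hat I_T$ lies in $(0,1)$, where $I^{-1}$ is defined, with probability tending to one. Clipping $\hat I_T$ to $[0,1]$ outside this event does not affect the limit. The delta method then yields
\[\sqrt T(\bar p_T-p)\to G_I/I'(p),\]
which has variance $(I'(p))^{-2}\sigma_I^2$.

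The main obstacle is the \emph{nondegeneracy conditions} in Steps 2 and 3: positivity of the denominator of $\psi^\circ$ and $I'(p)\neq 0$. The Markov CLT and the delta-method algebra are routine by comparison. A secondary point is that the argmin over $[0,1]$ must coincide with the root of \eqref{Ip} with probability tending to one. This follows because the objective is a quadratic in $I(p)$, and $I$ is a bijection from $[0,1]$ onto $[0,1]$.
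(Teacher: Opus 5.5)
Your proposal is correct and follows the paper's route: a Markov-chain CLT plus Cram\'er--Wold for $(\hat{\bs N}_{T-1},\hat{\bs N}^\circ_{T-1})$, then the delta method through $\psi^\circ$, and then through $I^{-1}$. The points you add fill in details the paper leaves implicit: working with the pair chain $({\bs M}_t,{\bs M}_{t+1})$, checking $\psi^\circ({\bs e}^\circ)=I(p)$, showing the denominator equals $n^2\,{\mathbb V}{\rm ar}(M_{1,0})>0$, proving $I'(p)\neq 0$, and showing that the argmin coincides with the root of \eqref{Ip} with probability tending to one.
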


\begin{remark} {\em 
    In the limit as $p \downarrow 0$, all individuals tend to remain at their current vertex, so that $\hat {\bs N}_{T-1}$ and $\hat {\bs N}^\circ_{T-1}$ effectively coincide. This implies that $\bar p_T=I^{-1}(1)$, which equals $0$, as expected. Likewise, in the limit as $p\uparrow 1$, the random variables $M_{i,t}$ and $M_{i,t+1}$ become independent, each having expectation $M/n$ (cf.\ Remark \ref{R4}), so that by a standard version of the law of large numbers
    \[\frac{1}{T-1}\sum_{t=1}^{T-1}N_{i,t}  = \frac{1}{T-1}\sum_{t=1}^{T-1}M_{i,t}\,M_{i,t+1} \to \frac{M^2}{n^2}\]
    as $T\to\infty$,
    and hence $n \,{\bs 1}^\top \hat {\bs N}_{T-1}\to{M^2}$. We conclude that in this regime $\bar p_T=I^{-1}(0)$, which equals $1$, as it should.} \hfill$\Diamond$
\end{remark}

\begin{remark}
    {\em One may wonder about the relationship between the least-squares estimator $\bar p_T$, which solves \eqref{Ip}, and the method-of-moments procedure discussed in Section~\ref{MOM}.
As stated in Lemma \ref{L1}, we have for $i\in\{1,\ldots,n\}$ and $t\in{\mathbb N}$,
   \[{\mathbb E}\big[M_{i,t+1}\,|\, {\bs M}_t\big] = (F(p)-G(p)) \,M_{i,t}+ G(p)\,M,\]
   so that
   \begin{equation}\label{ID}{\mathbb E}\big[M_{i,t+1} M_{i,t}\big]= (F(p)-G(p))\,{\mathbb E}\big[M_{i,t}^2\big] +G(p)\,M\,{\mathbb E}\big[M_{i,t}\big].\end{equation}
   In Section \ref{MOM} we proceeded by finding an expression for ${\mathbb E}[M_{i,t}^2]$ in terms of $p$ (in Proposition \ref{P1}), so that $p$ could be estimated after having estimated the covariance between $M_{i,t}$ and $M_{i,t+1}$.

   An alternative approach, however, is to estimate $p$ directly from \eqref{ID}. Indeed, summing over $i\in\{1,\ldots,n\}$, $t\in\{1,\ldots,T-1\}$ and dividing by $T-1$ suggests the estimation equation
   \[{\bs 1}^\top\hat{\bs N}_{T-1} = (F(p)-G(p))\,{\bs 1}^\top \hat{\bs N}^\circ_{T-1}+G(p)\,M^2.\]
Using that $G(p)=(1-F(p))/(n-1)$, rearranging this equation directly yields \eqref{Ip}. }
\hfill$\Diamond$
\end{remark}

\section{Numerical experiments}\label{S6}
In the previous sections, we introduced the estimators $\hat p_T$ and $\bar p_T$. The primary objective of this section is to empirically evaluate their performance across a range of settings. Since it is not \emph{a priori} clear which of the two estimators will perform better, we systematically consider a sequence of scenarios. In each scenario, we select specific values for the parameters $p$, $n$, and $M$.

As established in Theorems \ref{asnc} and \ref{asnI}, the standard deviations of the estimators $\hat p_T$ and $\bar p_T$ are given by $(c'(p))^{-1} \sigma_c$ and $(I'(p))^{-1} \sigma_I$, respectively. These formulas highlight two key factors that determine the performance of an estimator. First, the steepness of the theoretical moment as a function of the parameter to be estimated ($p$, that is) plays a crucial role: the steeper the curve, the more sensitive the moment is to changes in the parameter, and the more precise the estimation. Second, the variability of the estimated quantity itself directly impacts performance: a smaller standard deviation leads to a more reliable estimator. Taken together, these two factors provide a clear framework for understanding and comparing the efficiency of $\hat p_T$ and $\bar p_T$.

We begin by presenting an experiment in which the parameters $M$ and $n$ are held fixed while $p$ is varied systematically. Recognizing that both ${\sigma_I}$ and ${\sigma_c}$ depend on $p$, we introduce the following quantities to facilitate comparison between the two estimators:
\[ \lambda(p):= \frac{c'(p)}{I'(p)}, \quad \mu(p):=\frac{\sigma_I(p)}{\sigma_c(p)}.\quad \nu(p):=\lambda(p)\,\mu(p).\]
Here, $\lambda(p)$ quantifies the relative sensitivity of the theoretical moments with respect to changes in $p$, while $\mu(p)$ captures the relative variability of the two estimated quantities. Together, they provide a clear framework for analyzing how estimator performance evolves as $p$ varies. Note that we have explicit expressions for $c'(p)$ and $I'(p)$, whereas ${\sigma_I}(p)$ and ${\sigma_c}(p)$ do not have closed-form expressions. Consequently, we estimate these latter quantities empirically by performing a large number of simulation experiments.

Before comparing the two estimators, we first present QQ-plots to empirically assess their asymptotic normality. Figures~\ref{F2}--\ref{F3} indicate that both $\hat p_T$ and $\bar p_T$ are well approximated by a normal distribution, consistent with our asymptotic results, as stated in Theorems \ref{asnc}--\ref{asnI}.

\begin{figure}
    \includegraphics[width=0.32\textwidth]{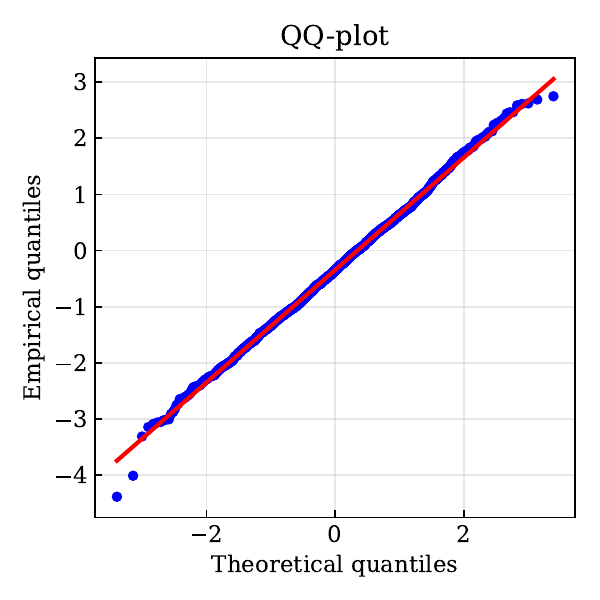}
    \includegraphics[width=0.32\textwidth]{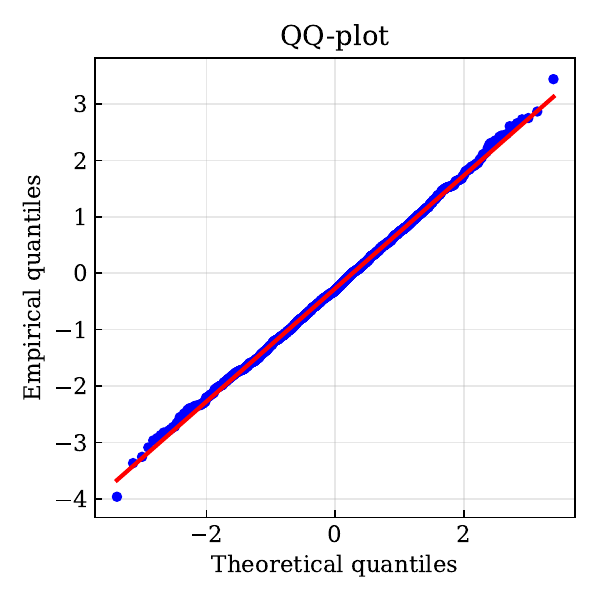}
    \includegraphics[width=0.32\textwidth]{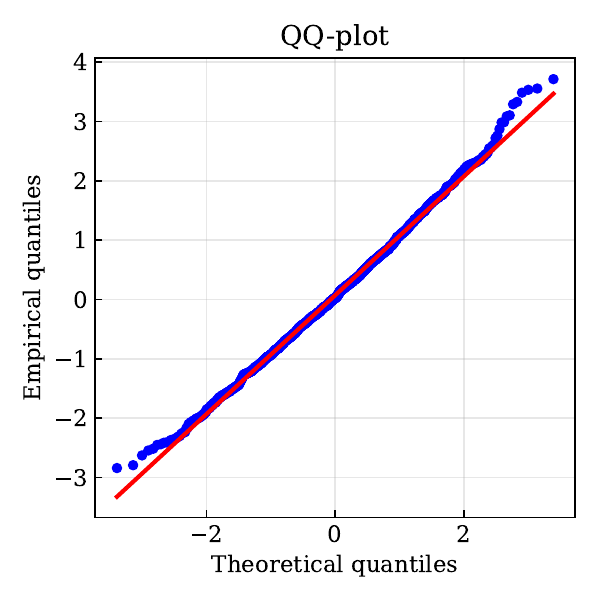}
    \caption{\label{F2}QQ-plot of the empirical distribution of $\hat p_T$ (vertical axis; `empirical quantiles') against the normal distribution (horizontal axis; `theoretical quantiles'), based on $R=2000$ replications, with $n=7$, $M=14$, and $T=4000$. Left panel: $p=0.25$, middle panel: $p=0.50$, right panel: $p=0.75$.}
\end{figure}

\begin{figure}
    \includegraphics[width=0.32\textwidth]{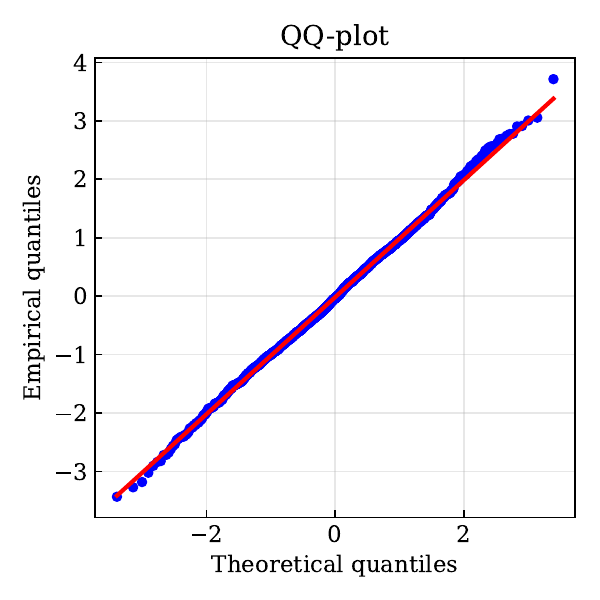}
    \includegraphics[width=0.32\textwidth]{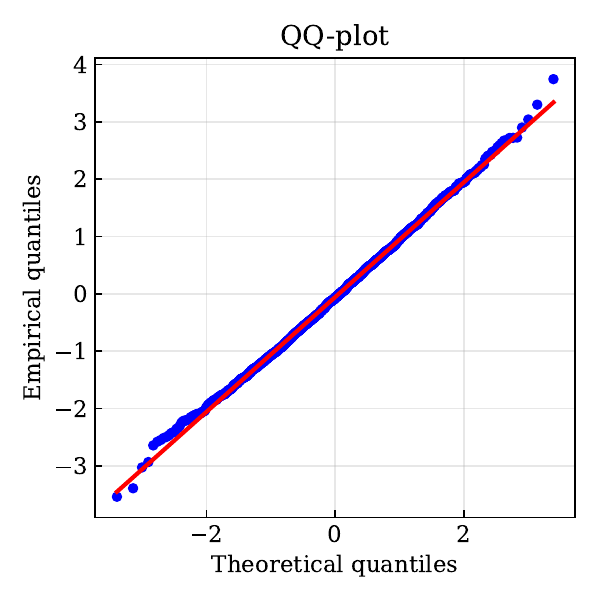}
    \includegraphics[width=0.32\textwidth]{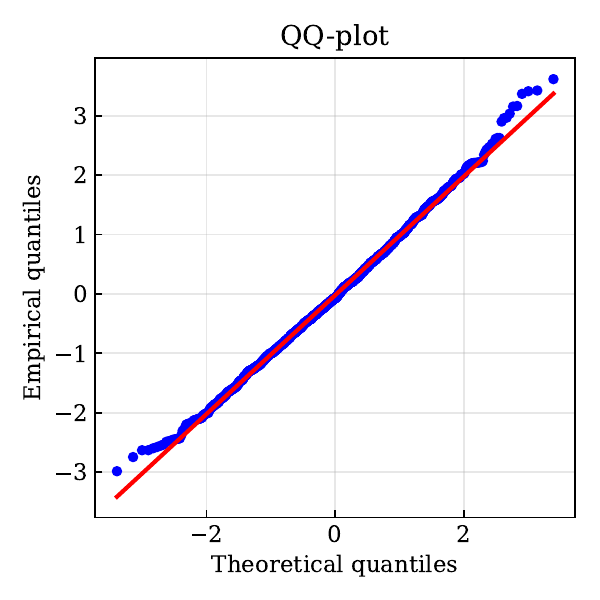}
    \caption{\label{F3}QQ-plot of the empirical distribution of $\bar p_T$ (vertical axis; `empirical quantiles') against the normal distribution (horizontal axis; `theoretical quantiles'), based on $R=2000$ replications, with $n=7$, $M=14$, and $T=4000$. Left panel: $p=0.25$, middle panel: $p=0.50$, right panel: $p=0.75$.}
\end{figure}

We conclude our empirical analysis by comparing the variance of the two estimators. Figure~\ref{F4} displays the quantities $\lambda(p)$ and $\mu(p)$ as functions of $p$. The left panel shows that, for the setting considered, $\lambda(p) > 1$, whereas the right panel shows that $\mu(p) < 1$. 
We performed additional experiments under various alternative parameter configurations, and the same qualitative pattern was observed consistently. We therefore conclude that, with respect to one of the factors entering $\nu(p)$, the estimator $\hat p_T$ performs better, whereas with respect to the other factor $\bar p_T$ has the advantage. Consequently, the overall comparison hinges on their joint effect, which we study next.

 Considering the product $\nu(p)$ of $\lambda(p)$ and $\mu(p)$, as shown in the right panel of Figure \ref{F5}, we observe that for small values of $p$, the estimator $\bar p_T$ has a lower variance than $\hat p_T$, whereas for larger values of $p$, $\hat p_T$ performs slightly better, though the two estimators remain roughly comparable.

We also conducted additional experiments with other values of $n$ and $M$, all of which confirmed the same overall pattern regarding the relative precision of $\hat p_T$ and $\bar p_T$. In particular, when both $n$ and $M$ are increased simultaneously (for example, $n = 15$ and $M = 30$) the performance of the two estimators becomes essentially equivalent.

\begin{figure}
    \includegraphics[width=0.48\textwidth]{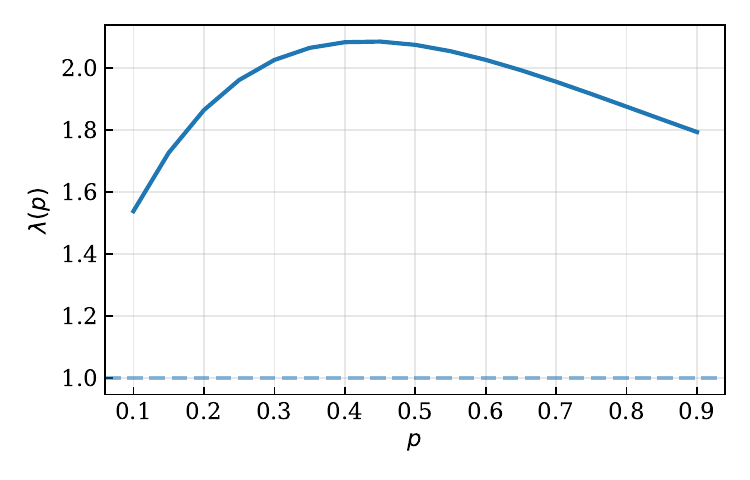}
    \includegraphics[width=0.48\textwidth]{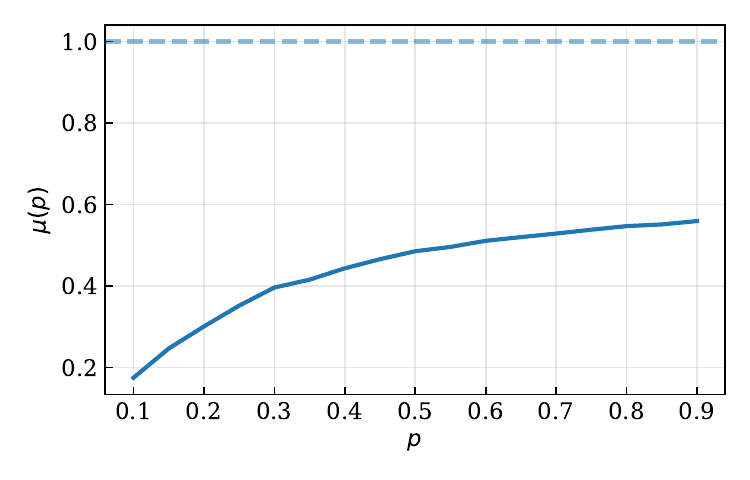}
    \caption{\label{F4}Left panel: $\lambda(p)$ as a function of $p$. Right panel: $\mu(p)$ as a function of $p$. In the setting considered, $n=7$ and $M=14$. The estimates underlying the right panel are based on runs of length $T=4000$ and $R=2000$ replications.}
\end{figure}

\begin{figure}
    \includegraphics[width=0.48\textwidth]{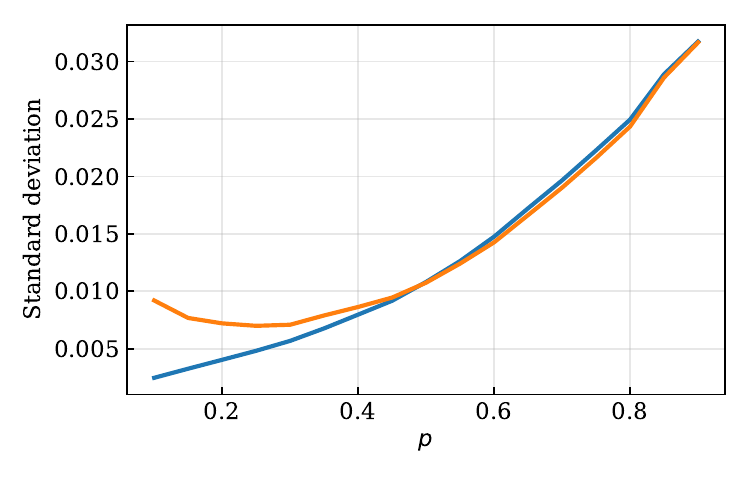}
    \includegraphics[width=0.48\textwidth]{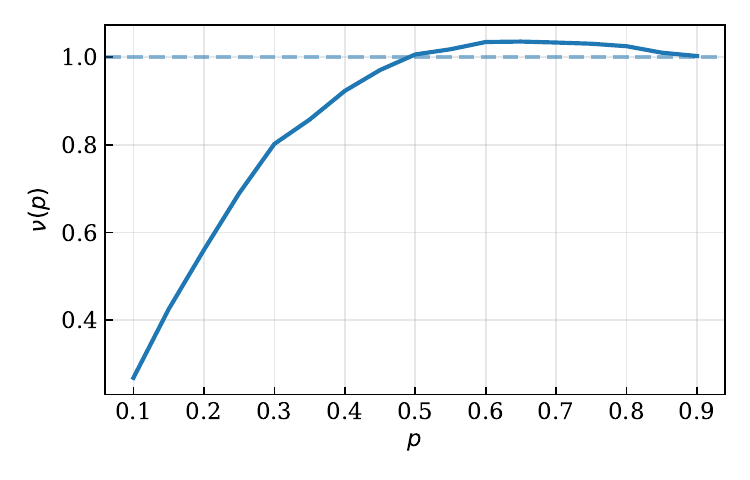}
    \caption{\label{F5}Left panel: the standard deviations of the estimators $\hat p_T$ and $\bar p_T$; orange curve corresponds to $\sqrt{\vphantom{x}\,} \hat p_T$ and blue curve to $\sqrt{\vphantom{x}\,} \bar p_T$. In the setting considered, $n=7$ and $M=14$.  Right panel: $\nu(p)$ as a function of $p$. The estimates are based on runs of length $T=4000$ and $R=2000$ replications.}
\end{figure}

{
}


\begin{thebibliography}{99}%

\bibitem{ACL}
\textsc{Y. A\"it-Sahalia,  J.~A. Cacho-Diaz,} and  {\sc R. Laeven} (2015).
Modeling financial contagion using mutually exciting jump processes.
\textit{Journal of Financial Economics}, {\bf 117}, pp.\ 585-606.



\bibitem{ATH}
{\sc S. Athreya, F. den Hollander,} and {\sc A. Röllin} (2020). Graphon-valued stochastic processes from population genetics. {\it Annals of Applied Probability}, {\bf  31}, pp.\  1724-1745.

\bibitem{BBHM24}
{\sc S. Baldassarri, P. Braunsteins, F. den Hollander,} and {\sc M. Mandjes} (2024). 
Opinion dynamics on dense dynamic random graphs. 
{\em arXiv preprint arXiv:2410.14618.}

\bibitem{BBHM26}
{\sc S. Baldassarri, P. Braunsteins, F. den Hollander,} and {\sc M. Mandjes} (2026). Infection models on dense dynamic random graphs. 
{\em arXiv preprint arXiv:2602.14562.}

\bibitem{BBLS19}
    {\sc F. Ball, T. Britton, K. Y. Leung,} and {\sc D. Sirl} (2019).
     A stochastic SIR network epidemic model with preventive dropping of edges. 
     {\em Journal of Mathematical Biology}, {\bf 78}, pp.\ 1875-1951.

\bibitem{BB22}
    {\sc F. Ball} and {\sc T. Britton} (2022).
    Epidemics on networks with preventive rewiring. 
    {\em Random Structures \& Algorithms}, {\bf 61}, pp.\ 250-297.

\bibitem{BS17}
    {\sc R.\ Basu} and {\sc A.\ Sly} (2017).
    Evolving voter model on dense random graphs. 
    {\em The Annals of Applied Probability} {\bf 27}, pp. \ 1235-1288.

\bibitem{BRA}
 {\sc P. Braunsteins, F. den Hollander}, and {\sc M. Mandjes} (2023).
 A sample-path large deviation principle for dynamic Erd\H{o}s--R\'enyi random graphs. {\it Annals of Applied Probability}, {\bf  33}, pp.\ 3278–3320.

\bibitem{B20}
{\sc T. Britton} (2020). 
Epidemic models on social networks — with inference. 
{\em Statistica Neerlandica}, {\bf 74}, pp.\ 222-241.

\bibitem{BAXV22}
{\sc F. Bu, A.  Aiello, J. Xu},  and {\sc A. Volfovsky} (2022).
Likelihood based inference for partially observed epidemics on dynamic networks, 
{\em Journal of the American Statistical Association}, {\bf 117}, pp.\ 510-526.

 \bibitem{CAU}
{\sc S. Cauchemez} and {\sc N. Ferguson} (2008). Likelihood-based estimation of continuous-time epidemic models from time-series data: application to measles transmission in London. {\it Journal of the Royal Society Interface}, {\bf 5}, pp.\ 885–897.

\bibitem{CD13}
{\sc S. Chatterjee} and {\sc P. Diaconis} (2013). 
Estimating and understanding exponential random graph models.
{\em Annals of Statistics}, {\bf 41}, pp.\ 2428-2461.

\bibitem{ChatterjeeVaradhan2011}
{\sc S.~Chatterjee} and {\sc S.~R.~S.~Varadhan} (2011).
\newblock The large deviation principle for the Erd\H{o}s--R\'enyi random graph.
\newblock \emph{European Journal of Combinatorics}, \textbf{32}, pp.\ 1000--1017.

\bibitem{CRA}
{\sc H. Crane} (2016). Dynamic random networks and their graph limits. {\it Annals of Applied Probability}, {\bf  26}, pp.\ 691–721.


\bibitem{DUF}
{\sc D. Duffie} and {\sc P. Glynn} (2004). Estimation of continuous-time Markov processes sampled at random time
intervals. {\it Econometrica}, {\bf 72}, pp.\ 1773–1808.

\bibitem{D12}
{\sc R. Durrett, J. Gleeson, A. Lloyd, P. Mucha, F. Shi, D. Sivakoff, J. Socolar}, and {\sc C. Varghese} (2012). 
Graph fission in an evolving voter model. 
{\em Proceedings of the National Academy of Sciences}, {\bf 109}, 3682-3687.

\bibitem{ErdosKnowlesYauYin2013}
{\sc L.~Erd\H{o}s}, {\sc A.~Knowles}, {\sc H.-T.~Yau}, and {\sc J.~Yin} (2013).
\newblock Spectral statistics of Erd\H{o}s--R\'enyi graphs I: Local semicircle law.
\newblock \emph{Annals of Probability}, \textbf{41}, pp.\ 2279--2375.


\bibitem{ER}
{\sc P.~Erd\H{o}s} and {\sc A.~R\'enyi} (1960).
\newblock On the evolution of random graphs.
\newblock \emph{Publication of the Mathematical Institute of the Hungarian Academy of Sciences}, \textbf{5}, pp.\ 17-61.


\bibitem{GUT}
{\sc P. Guttorp} (1991). {\it Statistical Inference for Branching Processes}. Wiley, Chichester, UK.

\bibitem{HAN}
{\sc M. Hansen} and {\sc S. Pitts} (2006). Nonparametric inference from the M/G/1 workload. {\it Bernoulli,} {\bf 12}, pp.\ 737–759.



\bibitem{hkm25a}
{\sc R.~S.~Hazra}, {\sc N.~Kriukov}, and {\sc M.~Mandjes} (2026).
\newblock Functional Central Limit Theorem for the principal eigenvalue of dynamic Erd\H{o}s--R\'enyi random graphs.
\newblock \emph{Annals of Applied Probability}, to appear. {\em arXiv preprint arXiv:2407.02686.}


\bibitem{RvdH1}
{\sc R.~van der Hofstad} (2016).
\newblock \emph{Random Graphs and Complex Networks, Volume 1}.
\newblock Cambridge University Press, Cambridge, UK.

\bibitem{RvdH2}
{\sc R.~van der Hofstad} (2023).
\newblock \emph{Random Graphs and Complex Networks, Volume 2}.
\newblock Cambridge University Press, Cambridge, UK.

\bibitem{HolmeSaramaki2012}
{\sc P.~Holme} and {\sc J.~Saram\"aki} (2012).
\newblock Temporal networks.
\newblock \emph{Physics Reports}, \textbf{519}, pp.\ 97-125.

\bibitem{KH14}
{\sc P. Krivitsky} and {\sc M. Handcock} (2014). 
A separable model for dynamic networks. 
{\em Journal of the Royal Statistical Society Series B: Statistical Methodology}, {\bf 76}, pp.\ 29-46.

\bibitem{L82}
{\sc R. Lockhart} (1982). 
On the non-existence of consistent estimates in Galton-Watson processes. 
{\em Journal of Applied Probability}, {\bf 19}, pp.\ 842-846.

\bibitem{Man_Wan}
{\sc M. Mandjes} and {\sc J. Wang} (2026).
{Estimation of on- and off-time distributions in a dynamic Erd\H{o}s--R\'enyi random graph}. 
{\it Advances in Applied Probability}, to appear. {\em arXiv preprint 	arXiv:2401.14531}.

\bibitem{MT12}
{\sc S. Meyn} and {\sc S. Tweedie} (2012). {\em Markov Chains and Stochastic Stability.} Springer, New York, USA.

\bibitem{RAV}
{\sc L. Ravner, O. Boxma,} and {\sc M. Mandjes} (2019). Estimating the input of a 
Lévy-driven queue by Poisson sampling
of the workload process. {\it Bernoulli}, {\bf 25}, pp.\ 3734–3769.

\end{thebibliography}
\end{document}